\documentclass[14pt]{amsart}
\usepackage[margin=3cm]{geometry}

\usepackage{microtype} 

\usepackage{amsthm,amsmath,amssymb,amsfonts}
\usepackage[alphabetic]{amsrefs}
\usepackage[skip=4pt plus1pt, indent=10pt]{parskip} 
\usepackage{enumerate}
\usepackage{xcolor}
\usepackage{tikz}
\usepackage[colorlinks,bookmarks, anchorcolor=blue, citecolor=blue, linkcolor=blue,urlcolor=blue, bookmarksopenlevel=1]{hyperref}
\usepackage{tikz-cd}

\newtheorem{theorem}{Theorem}[section]
\newtheorem{proposition}[theorem]{Proposition}
\newtheorem{lemma}[theorem]{Lemma}
\newtheorem{corollary}[theorem]{Corollary}

\theoremstyle{definition}

\newtheorem{remark}[theorem]{Remark}

\begin{document}\baselineskip=15pt

\begin{center}
\title[Uniruledness of modular varieties]{A simple criterion for the uniruledness of an orthogonal modular variety}

\author{Ignacio Barros}
\address{\parbox{0.9\textwidth}{
Department of Mathematics\\[1pt]
Universiteit Antwerpen\\[1pt]
Middelheimlaan 1, 2020 Antwerpen, Belgium
\vspace{1mm}}}
\email{{ignacio.barros@uantwerpen.be}}

\subjclass[2020]{14G35, 14J15, 14J10, 14E08}
\keywords{Modular varieties, uniruledness, K3 surfaces, moduli spaces.}

\maketitle
\end{center}

\begin{abstract}
We exhibit a simple uniruledness criterion for general orthogonal modular varieties in terms of invariants of the corresponding lattice. As an application, we obtain the uniruledness of almost all Nikulin--Vinberg moduli spaces parameterizing projective K3 surfaces of Picard number at least $3$ and fixed finite automorphism group.
\end{abstract}

\setcounter{tocdepth}{1} 

\section{Introduction}

A central problem in geometry bringing together automorphic forms, birational geometry, and moduli theory is the study of the birational geometry of arithmetic quotients of Hermitian symmetric domains. For example, the moduli space of principally polarized abelian varieties $\mathcal{A}_g$ is an arithmetic quotient of the Siegel upper-half space $\mathbb{H}_g$. The Kodaira classification of $\mathcal{A}_g$ was an influential problem studied by Tai, Freitag, Mumford, and Donagi \cites{Tai82, Fre83, Mum83, Don84} leaving open only the case $g=6$, see \cite{DSMS21}. Another prominent example is the one of orthogonal modular varieties. Let $G={\rm{O}}(V)$, where $(V,q)$ is a real quadratic space of signature $(b,2)$, and $K$ a maximal compact subgroup. Arithmetic quotients of $G\big/K$ are natural instances of moduli spaces of projective K3 surfaces, cubic fourfolds, and hyperk\"{a}hler varieties, all fundamental objects of geometry. The study of the birational geometry of moduli spaces of K3 surfaces, in particular the Kodaira classification was initiated by Mukai, Kond\={o}, and Gritsenko \cites{Muk88, Muk92, Kon93, Gri94, Muk96, Kon99}, and studied further (also beyond the $2$-dimensional case) by many authors, cf. \cites{Muk06, GHS07, Muk10, GHS10, GHS11, FV14, TVA19, FV21, BBBF23}. A first systematic study of the Kodaira dimension of orthogonal modular varieties was carried out by Ma \cite{Ma18} building on \cite{GHS07b}. Ma's result confirms the general expectation that orthogonal modular varieties are almost always of general type, and those of negative Kodaira dimension should appear only in low dimension and for special arithmetic groups $\Gamma\subset G$. The main goal of this paper is to complement \cite{Ma18} by establishing a general criterion for an orthogonal modular variety to have negative Kodaira dimension.

The first study of uniruledness in the general context was carried out in \cites{Gri10, GH14}, where Gritsenko and Hulek show that an orthogonal modular variety $\mathcal{D}_L\big/\Gamma$ is uniruled if it admits a reflective modular form of large weight and controlled vanishing, see \cite{GH14}*{Theorems 1.3 and 2.1} for details. Yet the problem of determining the existence of such modular forms is rather delicate and in general open. See \cite{Wan24} for recent progress on the subject. 

Let $L$ be an even lattice of signature $(b,2)$ with quadratic form $q_L(\cdot)=\frac{\langle \cdot,\cdot\rangle}{2}$. We will assume $b>2$. The group $G={\rm{O}}(L_\mathbb{R})$ acts transitively on the Grassmannian ${\rm{Gr}}^-(2,L_\mathbb{R})$ of $2$-planes $\pi$ in $L_{\mathbb{R}}$ where the restriction $q_L\mid_\pi$ is negative-definite. The stabilizer of a point is a maximal compact subgroup isomorphic to $K={\rm{O}}(b)\times{\rm{O}}(2)$ and the symmetric domain ${\rm{Gr}}^-(2,L_\mathbb{R})\cong G\big/K$ can be seen as an analytic open in the quadric $\mathcal{Q}_L\subset \mathbb{P}\left(L_\mathbb{C}\right)$ defined by $q_L(Z)=0$. Concretely, the subset 
\[
\mathcal{D}_L\cup\overline{\mathcal{D}}_L=\left\{[Z]\in \mathcal{Q}_L\left|\langle Z,\overline{Z}\rangle<0\right.\right\}
\]
has two components exchanged by complex conjugation and $\mathcal{D}_L\cong {\rm{Gr}}^-(2,L_\mathbb{R})\cong G\big/K$. We denote by $A_L$ the discriminant group $L^{\vee}\big/L$ together with the corresponding $\mathbb{Q}\big/\mathbb{Z}$-valued quadratic form $q_L\mod \mathbb{Z}$. The {\textit{discriminant of $L$}} is defined by $D=\left|A_L\right|$ and the {\textit{level}} is the smallest positive integer $N$ such that $N\cdot q_L$ is integral on $L^{\vee}$, or equivalently, trivial on $A_L$. The {\it{stable orthogonal group}} is the subgroup $\widetilde{\rm{O}}^+\left(L\right)$ of $\rm{O}(L)$ fixing the component $\mathcal{D}_L$ and acting trivially on $A_L$. The quotient
\[
\mathcal{F}_L=\mathcal{D}_L\big/\widetilde{\rm{O}}^+\left(L\right)
\]
is a quasi-projective variety of dimension $b$. Our main result is the following:

\begin{theorem}
\label{thm:int:main}
Let $L$ be an even lattice of signature $(b,2)$, level $N$, and discriminant $D$ splitting off one copy of $U$. Let $k=\frac{1}{2}{\rm{rk}}(L)=\frac{b}{2}+1$ and $p$ prime. Then $\mathcal{F}_L$ is uniruled provided 
\begin{equation}
\label{eq:sec2:main_1}
4b<\frac{\left(2\pi\right)^{k}}{\sqrt{D}\cdot\Gamma\left(k\right)\cdot\zeta\left(\lfloor k\rfloor\right)}\cdot C(N,k),
\end{equation}
where 
\[
C(N,k)=\prod_{p\mid 2N}1-\frac{1}{p}\;\;\hbox{if $b$ is even, and }\;\;C(N,k)=\prod_{p\mid 2N}\frac{p-1}{p(1-p^{1-2k})}\;\;\hbox{if $b$ is odd.}
\]
Further, if $L$ splits off two copies of $U$, then $C(N,k)$ can be taken as
\[
C(N,k)=\prod_{p\mid 2N}1-\frac{1}{p^2}\;\;\hbox{if $b$ is even, and }\;\;C(N,k)=\prod_{p\mid 2N}\frac{p^2-1}{p^2(1-p^{1-2k})}\;\;\hbox{if $b$ is odd.}
\]
\end{theorem}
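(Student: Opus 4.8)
The plan is to realize $\mathcal{F}_L$ inside a smooth projective toroidal compactification $\overline{\mathcal{F}_L}$ and prove that $K_{\overline{\mathcal{F}_L}}$ is not pseudo-effective; by the theorem of Boucksom--Demailly--Paun--Peternell (or, once a covering family of curves of negative canonical degree is produced, by Miyaoka--Mori) this already forces $\mathcal{F}_L$ to be uniruled. Write $\overline{\mathcal{L}}$ for the natural extension to $\overline{\mathcal{F}_L}$ of the $\mathbb{Q}$-line bundle $\mathcal{L}$ of weight-one modular forms. The first ingredient is the canonical bundle formula: since $L$ splits off a copy of $U$ (so that Eichler's criterion applies, $\widetilde{\rm{O}}^+(L)$ contains no quasi-reflection, and one can choose a model with at worst canonical singularities, in the style of Gritsenko--Hulek--Sankaran and Ma) one has, for $b$ not too small,
\[
K_{\overline{\mathcal{F}_L}}\sim_{\mathbb{Q}} b\,\overline{\mathcal{L}}-\tfrac12\,\overline{R}-\Delta ,
\]
where $\Delta$ is the toroidal boundary and $\overline{R}$ is the branch divisor of $\mathcal{D}_L\to\mathcal{F}_L$, an irreducible Heegner divisor swept out by the reflective vectors $v\in L$ with $q_L(v)=1$ --- precisely those whose reflection $\sigma_v$ lies in $\widetilde{\rm{O}}^+(L)$ and fixes a divisor in $\mathcal{D}_L$ (such a $\sigma_v$ preserves the component $\mathcal{D}_L$ exactly when $\langle v,v\rangle>0$, and it acts trivially on $A_L$ with divisorial fixed locus exactly when $v$ is primitive with $\langle v,v\rangle=2$). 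Hence it suffices to show that $b\,\overline{\mathcal{L}}-\tfrac12\overline{R}$ is not pseudo-effective.

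To this end I would produce a strongly movable curve on which this class has negative degree. Fix $m\gg 0$; since $\overline{\mathcal{L}}$ is big and semiample, a general member of the base-point-free system $\lvert m\overline{\mathcal{L}}\rvert$ is the pullback of a general hyperplane section of the Baily--Borel model, and the intersection $C$ of $b-1$ of them is an irreducible curve. As the $C$ sweep out $\overline{\mathcal{F}_L}$, the class $[C]$ is movable and pairs non-negatively with every pseudo-effective class. Thus it is enough to verify $(b\,\overline{\mathcal{L}}-\tfrac12\overline{R})\cdot C<0$, i.e.
\[
\frac{\overline{R}\cdot\overline{\mathcal{L}}^{\,b-1}}{\overline{\mathcal{L}}^{\,b}} > 2b .
\]

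It remains to evaluate the slope on the left. By Hirzebruch--Mumford proportionality, $\overline{\mathcal{L}}^{\,b}=v_b\cdot\mathrm{vol}\bigl(\widetilde{\rm{O}}^+(L)\bigr)$ and, since each component $D_v$ of $\overline{R}$ is the modular variety attached to $v^{\perp}$ with $\overline{\mathcal{L}}\rvert_{D_v}$ its own weight-one bundle, $D_v\cdot\overline{\mathcal{L}}^{\,b-1}=v_{b-1}\cdot\mathrm{vol}\bigl(\Gamma_v\bigr)$, where $v_n$ is the explicit proportionality constant in signature $(n,2)$ (a ratio of Chern numbers of smooth quadrics, hence elementary), $\mathrm{vol}$ denotes the Hirzebruch--Mumford volume, and $\Gamma_v\subset{\rm{O}}^+(v^{\perp})$ is the stabilizer of $v$. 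Summing over the reflective orbit, $\bigl(\sum_{[v]}\mathrm{vol}(\Gamma_v)\bigr)\big/\mathrm{vol}\bigl(\widetilde{\rm{O}}^+(L)\bigr)$ is, by Siegel's mass formula --- equivalently, it is $\deg\overline{R}\big/\overline{\mathcal{L}}^{\,b}$, read off from the Fourier coefficient of index one of the weight-$k$ Eisenstein series attached to the Weil representation of $L$ in the spirit of Bruinier's degree formula for Heegner divisors --- an explicit product of an archimedean density (which supplies $(2\pi)^k/\Gamma(k)$ and, through the Euler product over the good primes, $\zeta(\lfloor k\rfloor)$) with a factor $1/\sqrt D$ and with the local densities at the primes dividing $2N$. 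The assumption $U\subset L$ ensures that $L$ and every $v^{\perp}$ represent $1$ everywhere locally, lie in a one-class genus, and have the indices $[\Gamma_v:\widetilde{\rm{O}}^+(v^{\perp})]$ under control, so that these bad-place densities are the uniform ones giving $C(N,k)=\prod_{p\mid 2N}(1-p^{-1})$ (resp.\ $\prod_{p\mid 2N}(p-1)/p(1-p^{1-2k})$ for $b$ odd); an additional hyperbolic plane makes every $v^{\perp}$ split $U$ as well and enlarges these densities to $1-p^{-2}$ (resp.\ $(p^2-1)/p^2(1-p^{1-2k})$). Collecting terms, the slope equals $\tfrac12\cdot\frac{(2\pi)^{k}}{\sqrt D\,\Gamma(k)\,\zeta(\lfloor k\rfloor)}\,C(N,k)$, and the displayed inequality becomes exactly \eqref{eq:sec2:main_1}.

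I expect the main obstacle to be the last computation: carefully matching the normalization of the Hirzebruch--Mumford volume with the adelic (Tamagawa) measure underlying Siegel's formula, extracting the archimedean factor correctly --- this is where $(2\pi)^k/\Gamma(k)$, its interaction with $\zeta(\lfloor k\rfloor)$, and the dichotomy between even and odd $b$ (integral versus half-integral weight, the $p^{1-2k}$ factors) first appear --- and evaluating the non-archimedean local densities at the primes dividing $2N$ under the two splitting hypotheses, which is the sole source of $C(N,k)$. A secondary and more routine point is to justify the canonical bundle formula on $\overline{\mathcal{F}_L}$, namely that $\widetilde{\rm{O}}^+(L)$ has no quasi-reflections or further divisorial fixed loci and that the singularities of a toroidal model do not contribute; both follow from $U\subset L$ together with a mild lower bound on $b$.
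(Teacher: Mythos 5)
Your overall strategy is the same as the paper's: write $K$ on a toroidal model as $b\lambda-\tfrac12\mathrm{Br}(\pi)-\Delta$, intersect with the nef/movable curve class $\lambda^{b-1}$ coming from the Baily--Borel polarization, bound the branch divisor from below by the Heegner divisor of square-one vectors, and convert the resulting slope $\deg(H_{1,0})/\mathrm{vol}$ into the $(1,0)$ Fourier coefficient of the vector-valued Eisenstein series $E_{k,L}$ (the paper does this via Kudla's degree formula rather than Hirzebruch--Mumford proportionality plus Siegel's mass formula, but these are two normalizations of the same Siegel--Weil identity). Up to that point the plan is sound.

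The genuine gap is in the final quantitative step. You assert that the slope \emph{equals} $\tfrac12\cdot\frac{(2\pi)^k}{\sqrt D\,\Gamma(k)\,\zeta(\lfloor k\rfloor)}C(N,k)$, justified by the claim that $U\subset L$ forces the bad-place densities to be ``the uniform ones'' and puts $L$ in a one-class genus. Neither claim is correct: a single hyperbolic plane does not give a one-class genus (Eichler's criterion needs two), and the exact value of $c_{1,0}(E_{k,L})$ involves a Dirichlet $L$-value ($L(k,\chi_{4D})$ for $b$ even, $L(k-\tfrac12,\chi_{D'})/\zeta(2k-1)$ for $b$ odd) together with the actual local densities $N_{1,0}(p^{w_p})/p^{(2k-1)w_p}$ at $p\mid 2N$, which depend on $L$ and are not $1-p^{-1}$ in general. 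The displayed right-hand side of \eqref{eq:sec2:main_1} is only a \emph{lower bound} for $|c_{1,0}(E_{k,L})|$, and proving that lower bound is the actual content of the theorem: one needs the two-sided estimate $\zeta(2s)/\zeta(s)\le L(s,\chi)\le\zeta(s)$ for real characters (which is where $\zeta(\lfloor k\rfloor)$ and the even/odd dichotomy come from) and the local density estimates of Bruinier--M\"oller, namely $\prod_{p\mid 2N}N_{1,0}(p^{w_p})/p^{(2k-1)w_p}\ge\prod_{p\mid 2N}(1-p^{-1})$ when $L\supset U$ and $\ge\prod_{p\mid 2N}(1-p^{-2})$ when $L\supset U^{\oplus2}$. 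Your proposal neither states nor proves these inequalities, and replacing them by an (incorrect) equality leaves the criterion unproved. A secondary inaccuracy: the branch divisor need not be irreducible nor exhausted by the square-one reflective vectors; one only needs the containment $\tfrac14 H_{1,0}\le\tfrac12\mathrm{Br}(\pi)$ (with the multiplicity-two convention for $H_{1,0}$), which fortunately is the direction your argument uses.
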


Here $\Gamma(\cdot)$ stands for the Gamma function and $\zeta(\cdot)$ for the Riemann zeta function. Note that for any arithmetic group $\Gamma\subset {\rm{O}}^+\left(L\right)$, if $\Gamma$ contains $\widetilde{\rm{O}}^+\left(L\right)$, then $\mathcal{F}_L$ dominates $\mathcal{D}_L\big/\Gamma$ and the uniruledness of the former implies the uniruledness of the latter. For convenience of the reader, in Table \ref{sec3:table:r(k)} we list the first values to the third decimal place of the quotient $\frac{(2\pi)^k}{\Gamma(k)\cdot\zeta(\lfloor k\rfloor)}$.

The proof of the theorem relies on three main ingredients: Miyaoka--Mori's bend-and-break \cite{MM86}, Kudla's volume formula for special divisors \cite{Kud03}, and Bruinier--Kuss formulas \cite{BK01} for the Fourier coefficients of vector-valued Eisenstein series. The idea of using \cite{Kud03} to show the negativity of $K_{\overline{\mathcal{F}}_{2d}}$ and obtain new cases of moduli spaces $\mathcal{F}_{2d}$ of primitively polarized K3 surfaces of degree $2d$ of negative Kodaira dimention was first pointed out by Peterson in \cite{Pet15}*{Section 4.7}\footnote{In \cite{Pet15}*{Theorem 4.7.3} it is claimed that $\kappa\left(\mathcal{F}_{2d}\right)<0$ for $d\leq 15$, yet we think there is a minor calculation mistake coming from a factor of $2$ missing in the denominator of the intersection $\lambda^{18}\cdot B$. According to our calculations $\lambda^{18}\cdot K_{\overline{\mathcal{F}}_{2d}}$ is negative for $d\leq 3$ and positive for $d\geq 4$. This gives uniruledness in cases where the unirationality of $\mathcal{F}_{2d}$ is well-known.}. Peterson's strategy was further used in \cite{BBFW24}*{Section 4} to establish a uniruledness criterion in terms of Fourier coefficients of Eisenstein series and applied to study the uniruledness of various moduli spaces of polarized hyperk\"{a}hler manifolds. Here we focus on the general case and use the bounds obtained in \cite{BM19} to get a simple criterion. 

In the last section of the paper, we discuss some examples and applications. As an application, we obtain the uniruledness of Nikulin--Vinberg moduli spaces parameterizing K3 surfaces of Picard rank $\rho\geq 3$ with a fixed finite automorphism group. In \cites{Nik79,Nik81,Nik84,Vin07}, Nikulin and Vinberg classified all possible N\'{e}ron--Severi lattices of projective K3 surfaces of Picard number $\rho\geq 3$ with finite automorphism group and Kond\={o} \cite{Kon89} listed the possible automorphism groups. Further, in \cite{Rou22} the list of finite groups corresponding to each one of the $118$ families was completed. This leads to $118$ moduli spaces parameterizing lattice-polarized K3 surfaces \cite{Dol96} of Picard rank $\rho\geq 3$ whose very general element has a fixed finite automorphism group. See \cite{Rou22}*{Section 16} for the complete list. Several of these moduli spaces were classically known to be (uni)rational and the unirationality of many more was established in \cite{Rou22} leaving open $55$ cases. 

\begin{theorem}
\label{sec1:thm:Main2}
All $118$ Nikulin--Vinberg moduli spaces parameterizing projective K3 surfaces of Picard rank $\rho\geq 3$ with fixed finite automorphism group are uniruled with $12$ possible exceptions. See Table \ref{sec4:table:thm4.1}. 
\end{theorem}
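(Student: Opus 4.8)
The plan is to deduce Theorem~\ref{sec1:thm:Main2} from Theorem~\ref{thm:int:main}, by realizing each of the $118$ moduli spaces as an orthogonal modular variety and checking the inequality \eqref{eq:sec2:main_1} family by family.

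First I would set up the dictionary. For a fixed family, let $M$ be the corresponding N\'{e}ron--Severi lattice, an even lattice of signature $(1,\rho-1)$ with $\rho\geq 3$, whose Gram matrix is recorded in \cite{Rou22}*{Section~16}. By Nikulin's embedding theorem $M$ embeds primitively in the K3 lattice $\Lambda_{K3}=U^{3}\oplus E_8(-1)^{2}$, and — after replacing the orthogonal complement $L=M^{\perp}_{\Lambda_{K3}}$ by $L(-1)$ if needed to match the sign convention of Section~1 — the lattice $L$ has signature $(20-\rho,2)$ and rank $22-\rho$. Since $\Lambda_{K3}$ is unimodular, $A_L\cong A_M$ up to sign, so the discriminant $D=|\det M|$ and the level $N$ of $L$ equal those of $M$ and can be read straight off the tables. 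The moduli space of $M$-polarized K3 surfaces in the sense of Dolgachev is $\mathcal{D}_L\big/\Gamma$ for an arithmetic group $\Gamma$ containing $\widetilde{\rm{O}}^+(L)$, so by the remark following Theorem~\ref{thm:int:main} it suffices to show $\mathcal{F}_L$ is uniruled, i.e., to verify \eqref{eq:sec2:main_1} for $L$, with $k=\tfrac12{\rm rk}(L)=11-\tfrac{\rho}{2}$.

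The remaining input is how many copies of $U$ split off $L$. Since $\Lambda_{K3}$ contains three hyperbolic planes and, for the lattices on the list, $M$ has small rank and small discriminant length, one checks — via Nikulin's existence and uniqueness criteria, or by exhibiting an explicit embedding — that $L\cong U\oplus L'$ for all but at most the highest-Picard-number families, and $L\cong U^{2}\oplus L''$ for a good many of them; whenever the double splitting holds one uses the stronger constant $C(N,k)$ of Theorem~\ref{thm:int:main}. Running the resulting finite check over all $118$ lattices, the inequality \eqref{eq:sec2:main_1} is satisfied in all but $12$ cases; combined with the families already proven unirational in \cite{Rou22} — hence uniruled, as they are positive-dimensional — this gives Theorem~\ref{sec1:thm:Main2} with the exceptions collected in Table~\ref{sec4:table:thm4.1}.

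I expect the real work to be the bookkeeping in the borderline regime, not any isolated hard step. The families with $\rho$ close to $20$ have small $b=20-\rho$, hence small $k$, so a large discriminant $D$ quickly drops the right-hand side of \eqref{eq:sec2:main_1} below $4b$; for each such family one must pin down the exact $U$-splitting — an extra copy of $U$ changes $k$ and the available constant — and decide whether some sharper estimate, or \cite{Rou22}, saves it, failing which it joins the exceptional list. One must also not take the $U^{2}$-splitting for granted, since for lattices of prime level with several ramified primes the one-$U$ product $C(N,k)$ is noticeably smaller. Finally, any very low-dimensional families with $b\leq 2$ fall outside the hypotheses of Theorem~\ref{thm:int:main}, where bend-and-break gives nothing, and must be dealt with directly or counted among the exceptions.
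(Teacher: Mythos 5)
Your overall strategy is the paper's: realize each family as $\mathcal{D}_{L}\big/\Gamma$ with $L=M^{\perp}_{\Lambda_{K3}}$ and $\Gamma\supseteq\widetilde{\rm{O}}^+(L)$, read off $(D,N,k)$ from the identification of $A_{L}$ with $A_M$ up to sign, determine how many copies of $U$ split off $L$, and run the inequality of Theorem \ref{thm:int:main} over the finite list. That correctly disposes of the majority of the $55$ open cases.

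The gap is in your headline claim that the inequality \eqref{eq:sec2:main_1} ``is satisfied in all but $12$ cases.'' It is not: among the $55$ open families it fails in $18$ cases (rows $(1)$--$(11)$, $(13)$, $(14)$, $(16)$--$(18)$, $(46)$, $(49)$ of Table \ref{sec4:table:thm4.1}), and to reach only $12$ exceptions the paper must rescue six of these ($(1)$, $(9)$, $(13)$, $(16)$, $(18)$, $(49)$) by a strictly stronger mechanism: bypass the lower bound $C(N,k)$ and the estimate $\zeta(2s)/\zeta(s)\leq L(s,\chi)$ altogether, compute the exact Fourier coefficient $c_{1,0}\left(E_{k,L^{\perp}}\right)$ from the closed formulas of Lemma \ref{lemma:sec2:(1,0)-coef} (in practice via \cite{weilrep}), and apply Proposition \ref{sec3:prop:uniruled} directly, i.e.\ check $4b<\left|c_{1,0}\left(E_{k,L^{\perp}}\right)\right|$. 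For instance in row $(1)$ one has $(D,N,k)=(20,40,\tfrac{19}{2})$ and the right-hand side of \eqref{eq:sec2:main_1} falls below $4b=68$, yet $c_{1,0}\left(E_{19/2,L^{\perp}}\right)=-71.846\ldots$ clears the bar. Your phrase ``some sharper estimate\ldots saves it'' gestures at this but never identifies it; without it you would end with $18$ exceptions, not $12$. Two smaller points: an extra copy of $U$ does not change $k$, which equals $\tfrac12{\rm{rk}}(L)$ and is fixed by $\rho$ --- it only upgrades the constant $C(N,k)$; and the worry about $b\leq 2$ does not arise, since the $55$ families without a known unirational parametrization all have $b\geq 5$.
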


Recently in \cite{CMW24} the authors established an explicit rational parametrization of various moduli spaces of lattice-polarized K3 surfaces of large Picard number. In particular the last two cases in Table \ref{sec4:table:thm4.1} are rational.

The paper is organized as follows. In Section \ref{sec2} we recall some preliminaries on vector-valued modular forms and the relation between intersection numbers and coefficients of Eisenstein series. In Section \ref{sec3} we prove Theorem \ref{thm:int:main} and finally in Section \ref{sec4} we discuss applications and prove Theorem \ref{sec1:thm:Main2}.

\subsection*{Acknowledgments}
I would like to thank D. Agostini, P. Beri, J. H. Bruinier, L. Flapan, P. Kiefer, A. L. Knutsen, and B. Williams for helpful comments, discussions, and email correspondences. I would also like to thank Pedro Montero for pointing us to relevant literature on the Nikulin--Vinberg moduli spaces. The author was supported by Fonds voor Wetenschappelijk Onderzoek – Vlaanderen (FWO, Research Foundation – Flanders) -- G0D9323N and Deutsche Forschungsgemeinschaft (DFG, German Research Foundation) -- – Project-ID 491392403 – TRR 358.

\section{Vector valued Eisenstein series and intersection numbers}
\label{sec2}

In this section we gather some background material on Heegner divisors, vector-valued modular forms with respect to the Weil representation, and the connection with intersection numbers.\\

Let $L$ is an even lattice of signature $(b,2)$ splitting off one copy of the hyperbolic plane $U$ and $\mathbb{C}\left[A_L\right]$ the group algebra of the discriminant group $A_L$ with standard $\mathbb{C}$-basis $\left\{\mathfrak{e}_\mu\left|\mu\in A_{L}\right.\right\}$. Attached to such data there is a distinguished representation of the metaplectic double cover of ${\rm{SL}}_2(\mathbb{Z})$
\[
\rho_L:\widetilde{{\rm{SL}}_2}(\mathbb{Z})\longrightarrow {\rm{GL}}\left(\mathbb{C}\left[A_L\right]\right)
\]
called the {\it{Weil representation}}, see \cite{Bor98}. This gives rise to modular forms with values in $\mathbb{C}\left[A_L\right]$. A {\it{modular form}} $f\in {\rm{Mod}}_{k,L}$ of weight $k\in\frac{1}{2}\mathbb{Z}$ with respect to $\rho_L$ is an holomorphic function 
\[
f:\mathbb{H}\longrightarrow\mathbb{C}\left[A_L\right]
\]
holomorphic at the cusp $i\infty$ and such that for all $g=(A,\phi)\in \widetilde{{\rm{SL}}_2}(\mathbb{Z})$ and $\tau\in\mathbb{H}$, the function $f$ transforms as
\[
f(g\tau)=\phi(\tau)^{2k}\rho_L(g)f(\tau).
\]
Every such modular form admits a Fourier expansion centered at the cusp at infinity of the form
\[
f(\tau)=\sum_{\substack{\mu\in D(L)\\n\in \frac{1}{N}\mathbb{Z}_{\geq0}}}a_{n,\mu}q^n\mathfrak{e}_{\mu},
\]
where $q=e^{2\pi i \tau}$. They form a finite-dimensional vector space that admits a basis of modular forms with Fourier coefficients in $\mathbb{Q}$, giving ${\rm{Mod}}_{k,L}$ the structure of a $\mathbb{Q}$-vector space, see \cite{Bor99}*{Lemma 4.2} and \cite{McG03}*{Theorem 5.6}. Recall that $f$ is called a cusp form if $a_{0,\mu}=0$ for all isotropic elements $\mu\in A_L$. Cusp forms define a sub $\mathbb{Q}$-vector space denoted ${\rm{S}}_{k,L}$. Following \cite{BK01}, assume $k>2$ and $2k-2+b\equiv 0\mod4$. If $\widetilde{\Gamma}_{\infty}\subset\widetilde{\rm{SL}}_2\left(\mathbb{Z}\right)$ is the stabilizer of the cusp $i\infty$, then 
\[
E_{k,L}=\frac{1}{4}\sum_{g\in\widetilde{\Gamma}_{\infty}\big\backslash \widetilde{\rm{SL}}_2\left(\mathbb{Z}\right)}\phi(\tau)^{-2k}\rho_L(g)^{-1}\mathfrak{e}_0=\mathfrak{e}_0+\sum_{\substack{n,\mu\\n>0}}e_{n,\mu}q^n\mathfrak{e}_\mu
\]
is in ${\rm{Mod}}_{k,L}$. The Fourier coefficients $e_{n,\mu}$ of $E_{k,L}$ were computed explicitly in \cite{BK01}, in particular, they are all rational numbers, see \cite{BK01}*{Corollary 8}. One defines the $\mathbb{Q}$-vector space
\[
{\rm{Mod}}_{k,L}^\circ:=\mathbb{Q}E_{k,L}\oplus {\rm{S}}_{k,L}.
\]
Consider the Fourier coefficient extraction functional $c_{n,\mu}\in\left({\rm{Mod}}_{k,L}^\circ\right)^{\vee}$ given by 
\[
c_{n,\mu}:{\rm{Mod}}_{k,L}^\circ\longrightarrow \mathbb{Q},\;\;\;f(\tau)=\sum a_{n,\mu}q^n\mathfrak{e}_\mu\mapsto a_{n,\mu}.
\]
Important for us will be the following Fourier coefficient of $E_{k,L}$:

\begin{lemma}[See Theorem 11 in \cite{BK01}]
\label{lemma:sec2:(1,0)-coef}
If $k=\frac{b}{2}+1>2$, the $(1,0)$-Fourier coefficient of $E_{k,L}$ is given by 
\[
c_{1,0}\left(E_{k,L}\right)=-\frac{(2\pi)^k}{\sqrt{\left|A_L\right|}\cdot \Gamma(k)\cdot L(k,\chi_{4D})}\cdot\left(\prod_{\substack{p\hbox{ prime}\\p\mid 2N}}\frac{N_{1,0}(p^{w_p})}{p^{(2k-1)w_p}}\right)
\]
if $b$ is even, and by
\[
c_{1,0}\left(E_{k,L}\right)=-\frac{(2\pi)^k\cdot L(k-\frac{1}{2},\chi_{D'})}{\sqrt{\left|A_L\right|}\cdot \Gamma(k)\cdot \zeta(2k-1)}\cdot\left(\prod_{\substack{p\hbox{ prime}\\p\mid 2N}}\frac{N_{1,0}(p^{w_p})}{(1-p^{1-2k})p^{(2k-1)w_p}}\right)
\]
if $b$ is odd. Where in the formulas:
\begin{itemize}
\item[-] $\Gamma(\cdot)$ is the Gamma function, $\zeta(\cdot)$ the Riemann zeta function, $L(\cdot,\chi)$ the Dirichlet L-function with character $\chi$, and $\chi_{4D}$ (resp. $\chi_{D'}$) stands for the Dirichlet character modulo $|4D|$ (resp. $|D'|$) with $D=(-1)^k\det(L)$ (resp. $D'=2(-1)^{k+\frac{1}{2}}\cdot\det(L)$).
\item[-] The value $N_{1,0}(a)$ is given by 
\[
N_{1,0}(a)=\left|\left\{v\in L\big/aL\left|\;q_L(v)\equiv 1\mod a\right.\right\}\right|.
\]
\item[-] The value $w_p$ is given by 
\[
w_p=\begin{cases}
3&\hbox{ if }p=2\\
1&\hbox{ otherwise.}
\end{cases}
\]

\end{itemize}
\end{lemma}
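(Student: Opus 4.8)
The statement is the specialization to the index $(n,\mu)=(1,0)$ of the general Bruinier--Kuss description of the Fourier coefficients of $E_{k,L}$ (the cited \cite{BK01}*{Theorem 11} being precisely this case), so the plan is to derive it from that general coefficient formula and then trace through the specialization and the normalization conventions. Recall that \cite{BK01} expresses each coefficient $e_{n,\mu}$ with $n>0$ as an explicit elementary constant --- a power of $2\pi$, a factor $\Gamma(k)^{-1}$, a factor $|A_L|^{-1/2}$, a power of $n$, and a signature-dependent sign --- times a special value of a Dirichlet $L$-function, times a finite Euler product of local representation densities $\alpha_p(n,\mu)=\lim_{w\to\infty} N_{n,\mu}(p^{w})/p^{w(\mathrm{rk}(L)-1)}$. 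The shape of the $L$-factor depends on the parity of $\mathrm{rk}(L)=2k$, equivalently of $b$: when $b$ is even the weight $k$ is integral and a single value $L(k,\chi_{4D})$ occurs, while when $b$ is odd the weight is half-integral, $\rho_L$ is genuinely metaplectic, and the $L$-factor is replaced by the ratio $L(k-\tfrac12,\chi_{D'})/\zeta(2k-1)$; these are exactly the two cases of the statement.

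First I would set $n=1$. Since $1$ is a $p$-adic unit for every prime $p$, the local density $\alpha_p(1,0)$ coincides with the $p$-Euler factor of $L(k,\chi_{4D})$ (respectively of $L(k-\tfrac12,\chi_{D'})/\zeta(2k-1)$) at every prime $p$ at which $L\otimes\mathbb{Z}_p$ is unimodular, i.e.\ at every $p\nmid 2N$ --- here one uses that $N$ and $\det L$ have the same odd prime divisors, together with the standard computation of $\alpha_p$ for a $p$-adic unimodular even form. Consequently all but finitely many local factors collapse into the global $L$-value written out front, and only the primes $p\mid 2N$ survive in the product.

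Next I would evaluate the surviving local factors. By the stabilization in \cite{BK01} (the normalized representation numbers $N_{1,0}(p^{w})/p^{w(\mathrm{rk}(L)-1)}$ become constant once $w$ exceeds an explicit bound depending only on the $p$-adic structure of $L$ and on $n=1$), one gets $\alpha_p(1,0)=N_{1,0}(p^{w_p})/p^{(2k-1)w_p}$ with $w_p=1$ for odd $p$ and $w_2=3$; note that $\mathrm{rk}(L)-1=b+1=2k-1$ is exactly the exponent in the statement. Substituting, and unwinding the sign conventions via $D=(-1)^k\det L$ and $D'=2(-1)^{k+1/2}\det L$ together with the overall sign contributed by the Archimedean term and the normalizing factor $\tfrac14$ in the definition of $E_{k,L}$, yields the two displayed identities, including the leading minus sign.

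The only real difficulty is \emph{bookkeeping}: one must match the metaplectic weight, $L$-function, and local-measure normalizations of \cite{BK01} to those used here (in particular the precise power of $2$ in $D'$, the convention $q_L=\langle\cdot,\cdot\rangle/2$, and the correct stabilization exponent $w_2=3$ at the even prime), and check that the standing hypotheses $k>2$ and the congruence condition on $2k-2+b$ guarantee both convergence of the Eisenstein series and that $E_{k,L}$ is a holomorphic element of $\mathrm{Mod}_{k,L}$ with constant term $\mathfrak{e}_0$. Once these dictionaries are aligned, the lemma follows by direct substitution, with no analytic input beyond \cite{BK01}.
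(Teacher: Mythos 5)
Your proposal is correct and takes the same route as the paper, which gives no written proof at all: the lemma is stated purely as a citation of \cite{BK01}*{Theorem 11}, and your outline (specializing the Bruinier--Kuss coefficient formula to $(n,\mu)=(1,0)$, absorbing the local densities at $p\nmid 2N$ into the global $L$-value, and using the stabilization exponents $w_2=3$, $w_p=1$) is exactly the bookkeeping that citation suppresses.
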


For a lattice $L$ as above, the locally symmetric domain $\left\{\left[Z\right]\in\mathbb{P}\left(L_{\mathbb{C}}\right)\left|\langle Z,Z\rangle=0, \langle Z,\overline{Z}\rangle<0\right.\right\}$ has two components $\mathcal{D}_L\cup\overline{\mathcal{D}}_L$ exchanged by complex conjugation. One denotes by ${\rm{O}}^+\left(L\right)$ the subgroup of ${\rm{O}}\left(L\right)$ fixing $\mathcal{D}_L$, and by $\widetilde{\rm{O}}^+\left(L\right)$ the kernel of the discriminant map ${\rm{O}}^+\left(L\right)\longrightarrow {\rm{O}}\left(A_L\right)$. The {\textit{stable orthogonal modular variety}} $\mathcal{F}_L$ is defined as the quotient
\[
\mathcal{F}_L:=\mathcal{D}_L\big/\widetilde{\rm{O}}^+\left(L\right).
\]
It is a normal quasi-projective variety of dimension $b$, see \cite{BB66}. Let $v\in L^\vee$, we denote by $D_v$ the hyperplane $v^{\perp}\cap \mathcal{D}_L$ defined by $v$. For any coset $\mu+L\in A_L$ and $n\in\mathbb{Q}_{>0}$ the image of the cycle 
\[
\sum_{\substack{v\in \mu+L\\q_L(v)=n}}D_v
\]
via the $\widetilde{\rm{O}}^+\left(L\right)$-quotient is a $\mathbb{Q}$-Cartier divisor $H_{n,\mu}$ on $\mathcal{F}_L$ called a {\it{Heegner divisor}}. When there is no element $v$ in the coset $\mu+L$ with $q_L(v)=n$ one declares $H_{n,\mu}=0$. 

\begin{remark}
\label{sec2:rmk:2H}
Heegner divisors $H_{n,\mu}$ are in general neither reduced nor irreducible. Further, every component has multiplicity two if $2\mu=0$ in $A_L$ and multiplicity one otherwise.
\end{remark}

Further, $H_{0,0}$ is identified with $\mathcal{L}^{\vee}$, where $\mathcal{L}$ is the Hodge class, that is, the descent of $\mathcal{O}(-1)$ on $\mathcal{D}_L$ to $\mathcal{F}_L$. The fundamental result \cite{Bor99}*{Theorem 4.5} refined in \cite{McG03}, and building on earlier work of Kudla--Millson \cite{KM90} (see also \cite{Bru02}) is that the generating series
\begin{equation}
\label{eq:sec2:gen_ser}
H_{0,0}\mathfrak{e}_0+\sum_{\substack{n>0\\\mu\in A_L}}H_{n,\mu}q^n\mathfrak{e}_\mu\in{\rm{Pic}}_\mathbb{Q}\left(\mathcal{F}_L\right)[\![q^{\frac{1}{N}}]\!]\otimes\mathbb{C}\left[A_L\right]
\end{equation}
is actually an element in the finite-dimensional $\mathbb{Q}$-vector space ${\rm{Pic}}_\mathbb{Q}\left(\mathcal{F}_L\right)\otimes{\rm{Mod}}_{k,L}^\circ$. In particular, for any $\mathbb{Q}$-linear map
\[
\ell:{\rm{Pic}}_\mathbb{Q}\left(\mathcal{F}_L\right)\longrightarrow\mathbb{Q}
\]
the series $\sum \ell\left(H_{n,\mu}\right)q^n\mathfrak{e}_\mu$ is the Fourier expansion of a modular form in ${\rm{Mod}}_{k,L}^\circ$. This is particularly explicit when $\ell$ is given by the Baily--Borel degree map. Recall that the Baily--Borel model \cite{BB66} for $\mathcal{F}_L$ is the normal projective variety given by 
\[
\overline{\mathcal{F}}_L^{BB}:={\rm{Proj}}\left(\bigoplus_{s\geq0}H^0\left(\mathcal{F}_L,\mathcal{L}^{\otimes s}\right)\right)\subset \mathbb{P}^M.
\]
The boundary $\overline{\mathcal{F}}_L^{BB}\setminus\mathcal{F}_L$ is one dimensional whose components correspond to ${\rm{P}}\widetilde{\rm{O}}^+\left(L\right)$-orbits of rank two isotropic sublattices. In particular, if $b\geq 3$, then ${\rm{CH}}^1\left(\mathcal{F}_L\right)_{\mathbb{Q}}\cong{\rm{CH}}^1\left(\overline{\mathcal{F}}_L^{BB}\right)_{\mathbb{Q}}$ and there is a natural $\mathbb{Q}$-linear map
\[
{\rm{deg}}:{\rm{Pic}}_\mathbb{Q}\left(\mathcal{F}_L\right)\longrightarrow {\rm{CH}}^1\left(\overline{\mathcal{F}}_L^{BB}\right)_{\mathbb{Q}}\longrightarrow\mathbb{Q}
\]
sending $H_{n,\mu}$ to the degree of the closure $\overline{H}_{n,\mu}^{BB}$ with respect to $\mathcal{L}$ on the Baily--Borel model. Alternatively \cite{Kud03}, if $\lambda=c_1\left(\mathcal{L}\right)$ is the first Chern class of the Hodge bundle, then
\[
{\rm{deg}}\left(H_{n,\mu}\right)=\int_{H_{n,\mu}}\lambda^{b-1}.
\]
Let ${\rm{vol}}\left(\mathcal{F}_{L}\right)={\rm{deg}}\left(\lambda\right)=\int_{\mathcal{F}_L}\lambda^n$. Applying ${\rm{deg}}:{\rm{Pic}}_{\mathbb{Q}}\left(\mathcal{F}_L\right)\longrightarrow \mathbb{Q}$ to the generating series \eqref{eq:sec2:gen_ser} one obtains \cite{Kud03}*{Theorem I}:
\begin{equation}
\label{eq:sec2:Kud}
-{\rm{vol}}\left(\mathcal{F}_L\right)\cdot E_{\frac{b}{2}+1,L}=\sum_{n,\mu}{\rm{deg}}\left(H_{n,\mu}\right)q^n\mathfrak{e}_\mu.
\end{equation}

\section{Canonical class and uniruledness}
\label{sec3}

Let $L$ be an even lattice of signature $(b,2)$ with $b\geq3$. The canonical bundle on the analytic manifold $\mathcal{D}_L\subset\mathcal{Q}_L$ is given by $\mathcal{O}(-b)$, and the divisorial locus of the ramification of the modular projection
$\pi:\mathcal{D}_L\longrightarrow\mathcal{F}_L$ consist on hyperplanes $D_v$ fixed by reflections (up to sign) in $\widetilde{\rm{O}}^+\left(L\right)$, see \cite{GHS07}*{Corollary 2.13}. In particular, the divisorial ramification is of index two and by Riemann--Hurwitz
\begin{equation}
\label{eq:sec3:K}
K_{\mathcal{F}_L}=b\cdot\lambda-\frac{1}{2}{\rm{Br}}\left(\pi\right),
\end{equation}
where ${\rm{Br}}(\pi)$ is the reduced branch divisor of $\pi$. The following argument already appears in a different context in \cite{BBFW24}. We add it here for completeness.

\begin{proposition}[See \cite{BBFW24}*{Section 4}]
\label{sec3:prop:deg_neg}
If ${\rm{deg}}\left(K_{\mathcal{F}_L}\right)<0$, then $\mathcal{F}_L$ is uniruled.
\end{proposition}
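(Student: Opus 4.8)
The plan is to run the bend--and--break argument of \cite{BBFW24}*{Section 4}: produce a covering family of curves on a smooth projective model of $\mathcal{F}_L$ along which the canonical class is negative, and then invoke the Miyaoka--Mori numerical criterion for uniruledness \cite{MM86}.

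Concretely, I would work on the Baily--Borel model $\overline{\mathcal{F}}:=\overline{\mathcal{F}}_L^{BB}$, on which $\lambda=c_1(\mathcal{L})$ extends to an ample $\mathbb{Q}$-Cartier class and, by the very definition of the degree functional together with the isomorphism ${\rm CH}^1(\mathcal{F}_L)_{\mathbb{Q}}\cong{\rm CH}^1(\overline{\mathcal{F}})_{\mathbb{Q}}$ (valid since $b\geq 3$), one has $\deg(D)=D\cdot\lambda^{b-1}$ for every $D\in{\rm Pic}_{\mathbb{Q}}(\mathcal{F}_L)$. Fix $m$ large and divisible enough that $m\lambda$ is a very ample line bundle, and let $C=D_1\cap\dots\cap D_{b-1}$ be the complete intersection of $b-1$ general members $D_i\in|m\lambda|$. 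By Bertini, $C$ is an irreducible reduced curve with numerical class $[C]=m^{b-1}\lambda^{b-1}$, and as the $D_i$ vary the curves $C$ sweep out $\overline{\mathcal{F}}$, a general one passing through any prescribed general point.

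The point to verify is that a general such $C$ avoids the bad locus $Z:={\rm Sing}(\mathcal{F}_L)\cup\big(\overline{\mathcal{F}}\setminus\mathcal{F}_L\big)$. The boundary $\overline{\mathcal{F}}\setminus\mathcal{F}_L$ is one-dimensional, hence of codimension $b-1\geq 2$; and since the quotient of $\mathcal{D}_L$ by a reflection is again smooth while an automorphism fixing a hyperplane pointwise must be a reflection, $\widetilde{{\rm O}}^+(L)$ contains no quasi-reflections and ${\rm Sing}(\mathcal{F}_L)$ also has codimension at least $2$. Thus $\dim Z\leq b-2$, a general complete intersection curve is disjoint from $Z$, and in particular $C$ lies in the smooth open $U:=\mathcal{F}_L\setminus{\rm Sing}(\mathcal{F}_L)$.

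Finally I would choose a resolution $\sigma:\widetilde{X}\to\overline{\mathcal{F}}$ which is an isomorphism over $U$, and regard $U$ as a dense open in the smooth projective variety $\widetilde{X}$. For a general $C$ as above we then have $C\subset U\subset\widetilde{X}$, the family $\{C\}$ covers $\widetilde{X}$, and since $K_{\widetilde{X}}$, $K_U$ and the ($\mathbb{Q}$-Cartier) divisor $K_{\mathcal{F}_L}$ restrict to the same class on $C$,
\[
K_{\widetilde{X}}\cdot C=K_{\mathcal{F}_L}\cdot C=m^{b-1}\,K_{\mathcal{F}_L}\cdot\lambda^{b-1}=m^{b-1}\deg\big(K_{\mathcal{F}_L}\big)<0 .
\]
By the Miyaoka--Mori bend-and-break criterion \cite{MM86}, a smooth projective variety admitting a covering family of curves along which the canonical class is negative is uniruled; hence $\widetilde{X}$, and therefore its birational model $\mathcal{F}_L$, is uniruled. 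The only genuinely delicate step is the codimension estimate for $Z$, i.e. smoothness of $\mathcal{F}_L$ in codimension one together with one-dimensionality of the Baily--Borel boundary, which is precisely where the hypothesis $b\geq 3$ is used.
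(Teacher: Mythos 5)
Your argument is correct in substance, but it takes a genuinely different route from the paper's. The paper never produces an actual curve: it passes to a toroidal compactification $\eta:\overline{\mathcal{F}}_L^{tor}\to\overline{\mathcal{F}}_L^{BB}$ (which has only finite quotient singularities by \cite{AMRT10}), observes that the nef curve \emph{class} $(\eta^*\lambda)^{b-1}$ can be represented away from the boundary so that $\deg(D)=(\eta^*\lambda)^{b-1}\cdot\overline{D}^{tor}$, pulls this class back to a resolution $X_L$, and uses the projection formula (with $\varepsilon_*K_{X_L}=K_{\overline{\mathcal{F}}_L^{tor}}$) to get a nef curve class pairing negatively with $K_{X_L}$; uniruledness then follows from \cite{MM86} (cf.\ \cite{BDPP13}). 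You instead stay on the Baily--Borel model and cut out honest general complete intersection curves $C\in|m\lambda|^{b-1}$ avoiding both the boundary and ${\rm Sing}(\mathcal{F}_L)$, so that $K_{\widetilde{X}}\cdot C=m^{b-1}\deg(K_{\mathcal{F}_L})<0$ for a covering family, and apply \cite{MM86} directly. Your version is more concrete, but it genuinely requires the extra input that $\mathcal{F}_L$ is smooth in codimension one --- a fact the paper's projection-formula argument bypasses entirely. On that point your justification contains a slip: $\widetilde{\rm O}^+(L)$ in general \emph{does} contain reflections (this is precisely why ${\rm Br}(\pi)$ is nonempty and why $K_{\mathcal{F}_L}=b\lambda-\tfrac{1}{2}{\rm Br}(\pi)$ carries a correction term), so the assertion that it ``contains no quasi-reflections'' is false as stated. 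What saves you is \cite{GHS07}*{Corollary 2.13}: the only elements with divisorial fixed locus on $\mathcal{D}_L$ are $\pm$ reflections, and the quotient by a group generated by reflections is smooth, so ${\rm Sing}(\mathcal{F}_L)$ does have codimension at least two. With that repair, and the observation that the boundary is one-dimensional (hence of codimension $b-1\geq 2$), your proof goes through.
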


\begin{proof}
Let $\eta:\overline{\mathcal{F}}_L^{tor}\longrightarrow \overline{\mathcal{F}}_L^{BB}$ be a toroidal compactification of $\mathcal{F}_L$, and $\delta$ the boundary divisor. Recall that $\overline{\mathcal{F}}_L^{tor}$ has at worst finite quotient singularities, see \cite{AMRT10}. Then
\[
K_{\overline{\mathcal{F}}_L^{tor}}=b\cdot\lambda-\frac{1}{2}{\rm{Br}}\left(\pi\right)-a\delta \;\;\;\hbox{in}\;\;{\rm{Pic}}_{\mathbb{Q}}\left(\overline{\mathcal{F}}_L^{tor}\right),
\]
where $a\in\mathbb{Q}$\footnote{From the construction of toroidal compactifications \cite{AMRT10} one actually has $a=1$ for an appropriate choice of toroidal compactification, cf. \cite{Ma18}*{Section 1.1}.}. Since the boundary of $\overline{\mathcal{F}}_L^{BB}$ is one-dimensional and $b\geq3$, one can choose a representative of the curve class $\left(\eta^*\lambda\right)^{b-1}\in{\rm{NS}}_1\left(\overline{\mathcal{F}}_L^{tor}\right)$ disjoint from the boundary $\delta$. The curve class $\left(\eta^*\lambda\right)^{b-1}$ is nef (is a covering curve) and for any $D\in {\rm{Pic}}_{\mathbb{Q}}\left(\mathcal{F}_L\right)$ one has
\begin{equation}
\label{sec3:eq:deg(D)}
{\rm{deg}}\left(D\right)=\left(\eta^*\lambda\right)^{b-1}\cdot \overline{D}^{tor},
\end{equation}
where $\overline{D}^{tor}\in {\rm{Pic}}_{\mathbb{Q}}\left(\overline{\mathcal{F}}_L^{tor}\right)$ is any divisor class on $\overline{\mathcal{F}}_L^{tor}$ restricting to $D$ on $\mathcal{F}_L$. In particular,
\begin{equation}
\label{sec3:eq:degK}
{\rm{deg}}\left(K_{\mathcal{F}_L}\right)=\left(\eta^*\lambda\right)^{b-1}\cdot K_{\overline{\mathcal{F}}_L^{tor}}.
\end{equation}
Let $\varepsilon:X_L\longrightarrow\overline{\mathcal{F}}_L^{tor}$ be a resolution of singularities and consider the nef curve class $\gamma=\varepsilon^*\left(\eta^*\lambda\right)^{b-1}\in {\rm{NS}}_1\left(X_L\right)$. The projection formula together with \eqref{sec3:eq:degK} give us
\[
\gamma\cdot K_{X_L}={\rm{deg}}\left(K_{\mathcal{F}_L}\right)<0.
\]
In particular, $K_{X_L}$ is not pseudo-effective and $X_L$ is uniruled, see \cite{MM86}*{Theorem 1} (cf. \cite{BDPP13}).

\end{proof}

The reflection $\sigma_\rho\in {\rm{O}}(L)$ with respect to $\rho\in L_{\mathbb{Q}}$ is defined by 
\[
\sigma_\rho:v\mapsto v-2\frac{\langle v,\rho\rangle}{\langle\rho,\rho\rangle}\rho
\]
and the fixed locus is precisely the hyperplane $D_\rho$. The modular projection $\pi:\mathcal{D}_L\longrightarrow\mathcal{F}_L$ is simply ramified along the union of all $D_\rho$ where $\sigma_\rho$ or $-\sigma_\rho$ is in $\widetilde{\rm{O}}^+\left(L\right)$. Roots of $L$ always induce reflections \cite{GHS07}*{Proposition 3.1}. In particular $\pi$ is ramified over $H_{1,0}$, compare with \cite{BBFW24}*{Remark 4.1}. 

\begin{proposition}[See \cite{BBFW24}*{Proposition 1.2}]
\label{sec3:prop:uniruled}
Let $k=\frac{b}{2}+1$. Then the modular variety $\mathcal{F}_L$ is uniruled if
\[
4b<\left|c_{1,0}\left(E_{k,L}\right)\right|.
\]
\end{proposition}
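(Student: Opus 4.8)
The plan is to combine Proposition \ref{sec3:prop:deg_neg} with Kudla's formula \eqref{eq:sec2:Kud} and the fact that $\pi$ is ramified over $H_{1,0}$. First I would observe that by \eqref{eq:sec3:K} we have $K_{\mathcal{F}_L}=b\cdot\lambda-\frac{1}{2}{\rm{Br}}(\pi)$, so that
\[
{\rm{deg}}\left(K_{\mathcal{F}_L}\right)=b\cdot{\rm{vol}}\left(\mathcal{F}_L\right)-\tfrac{1}{2}{\rm{deg}}\left({\rm{Br}}(\pi)\right).
\]
Since $\pi$ is simply ramified along every $D_\rho$ with $\pm\sigma_\rho\in\widetilde{\rm{O}}^+(L)$, and roots of $L$ induce such reflections (\cite{GHS07}*{Proposition 3.1}), the branch divisor ${\rm{Br}}(\pi)$ contains the reduced divisor underlying $H_{1,0}$. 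Now $H_{1,0}$ has multiplicity two along every component, by Remark \ref{sec2:rmk:2H} (here $2\cdot 0=0$ in $A_L$), so the reduced divisor underlying $H_{1,0}$ is $\tfrac{1}{2}H_{1,0}$ as a $\mathbb{Q}$-divisor class; hence ${\rm{deg}}\left({\rm{Br}}(\pi)\right)\geq\tfrac{1}{2}{\rm{deg}}\left(H_{1,0}\right)$, provided $H_{1,0}$ is effective, i.e. provided ${\rm{deg}}(H_{1,0})\geq 0$, which we will see below.

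Next I would extract the relevant coefficient from \eqref{eq:sec2:Kud}. Taking the $(1,0)$-Fourier coefficient on both sides of $-{\rm{vol}}(\mathcal{F}_L)\cdot E_{k,L}=\sum_{n,\mu}{\rm{deg}}(H_{n,\mu})q^n\mathfrak{e}_\mu$ gives
\[
{\rm{deg}}\left(H_{1,0}\right)=-{\rm{vol}}\left(\mathcal{F}_L\right)\cdot c_{1,0}\left(E_{k,L}\right).
\]
By Lemma \ref{lemma:sec2:(1,0)-coef} the coefficient $c_{1,0}(E_{k,L})$ is negative (the quantities $N_{1,0}(a)$, the $L$-values at $k>2$, and the $\Gamma$- and $\zeta$-factors are all positive), so ${\rm{deg}}(H_{1,0})=-{\rm{vol}}(\mathcal{F}_L)\cdot c_{1,0}(E_{k,L})>0$; in particular $H_{1,0}$ is a nonzero effective class and the previous paragraph applies. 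Combining the displays,
\[
{\rm{deg}}\left(K_{\mathcal{F}_L}\right)\leq b\cdot{\rm{vol}}\left(\mathcal{F}_L\right)-\tfrac{1}{4}{\rm{deg}}\left(H_{1,0}\right)=b\cdot{\rm{vol}}\left(\mathcal{F}_L\right)+\tfrac{1}{4}{\rm{vol}}\left(\mathcal{F}_L\right)\cdot c_{1,0}\left(E_{k,L}\right).
\]
Since ${\rm{vol}}(\mathcal{F}_L)>0$, the right-hand side is negative precisely when $4b<-c_{1,0}(E_{k,L})=|c_{1,0}(E_{k,L})|$. Under that hypothesis ${\rm{deg}}(K_{\mathcal{F}_L})<0$, and Proposition \ref{sec3:prop:deg_neg} gives the uniruledness of $\mathcal{F}_L$.

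The main point requiring care is the bookkeeping of multiplicities: one must correctly track that $H_{1,0}$, as a Heegner divisor, already carries a factor of two along each component (Remark \ref{sec2:rmk:2H}), while ${\rm{Br}}(\pi)$ in \eqref{eq:sec3:K} is the \emph{reduced} branch divisor, and then not lose the resulting extra factor of $\tfrac12$ when passing to degrees. A secondary subtlety is that ${\rm{Br}}(\pi)$ may contain further components (reflections coming from vectors that are not roots, or from $-\sigma_\rho$), but these only make ${\rm{deg}}({\rm{Br}}(\pi))$ larger, so the inequality ${\rm{deg}}(K_{\mathcal{F}_L})\leq b\cdot{\rm{vol}}(\mathcal{F}_L)-\tfrac14{\rm{deg}}(H_{1,0})$ goes in the right direction and nothing is lost. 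Everything else is a direct substitution into \eqref{eq:sec2:Kud} and an application of Proposition \ref{sec3:prop:deg_neg}.
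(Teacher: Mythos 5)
Your proposal is correct and follows essentially the same route as the paper: the bound $\tfrac14 H_{1,0}\leq\tfrac12\mathrm{Br}(\pi)$ coming from the multiplicity-two structure of $H_{1,0}$ and the simple ramification, the degree comparison via the nef curve class $\left(\eta^*\lambda\right)^{b-1}$, Kudla's identity $\deg\left(H_{1,0}\right)=-\mathrm{vol}\left(\mathcal{F}_L\right)\cdot c_{1,0}\left(E_{k,L}\right)$, and the conclusion via Proposition \ref{sec3:prop:deg_neg}. Your explicit check that $c_{1,0}\left(E_{k,L}\right)<0$ is a small addition the paper leaves implicit, but the argument is the same.
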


\begin{proof}
Since $H_{1,0}$ has multiplicity two, and $\pi:\mathcal{D}_L\longrightarrow\mathcal{F}_L$ is simply ramified over $H_{1,0}$, one has $\frac{1}{4}H_{1,0}\leq\frac{1}{2}{\rm{Br}}\left(\pi\right)$. Further, since $\left(\eta^*\lambda\right)^{b-1}$ is nef in $\overline{\mathcal{F}}_L^{tor}$, cf. Equation \eqref{sec3:eq:deg(D)}, one has 
\[
{\rm{deg}}\left(\frac{1}{4}H_{1,0}\right)\leq{\rm{deg}}\left(\frac{1}{2}{\rm{Br}}(\pi)\right)\;\;\hbox{and}\;\; {\rm{deg}}\left(K_{\mathcal{F}_L}\right)\leq b\cdot{\rm{vol}}\left(\mathcal{F}_L\right)-\frac{1}{4}{\rm{deg}}\left(H_{1,0}\right).
\]
From Equation \eqref{eq:sec2:Kud}, one has 
\[
\frac{{\rm{deg}}\left(K_{\mathcal{F}_L}\right)}{{\rm{vol}}\left(\mathcal{F}_L\right)}\leq b-\frac{1}{4}\cdot\left|c_{1,0}\left(E_{k,L}\right)\right| 
\]
and one concludes by Proposition \ref{sec3:prop:deg_neg}.
\end{proof}

Our main theorem follows from Lemma \ref{lemma:sec2:(1,0)-coef} and Proposition \ref{sec3:prop:uniruled}. 

\begin{theorem}[= Theorem \ref{thm:int:main}]
\label{thm:sec2:main}
Let $L$ be an even lattice of signature $(b,2)$, level $N$, and discriminant $D$ splitting off one copy of $U$. Let $k=\frac{1}{2}{\rm{rk}}(L)=\frac{b}{2}+1$ and $p$ prime. Then $\mathcal{F}_L$ is uniruled provided 
\begin{equation}
\label{eq:sec2:main}
4b<\frac{\left(2\pi\right)^{k}}{\sqrt{D}\cdot\Gamma\left(k\right)\cdot\zeta\left(\lfloor k\rfloor\right)}\cdot C(N,k),
\end{equation}
where $C(N,k)$ is as in Theorem \ref{thm:int:main}.
\end{theorem}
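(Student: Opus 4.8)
The plan is to combine Proposition \ref{sec3:prop:uniruled} with the explicit formula for $c_{1,0}(E_{k,L})$ from Lemma \ref{lemma:sec2:(1,0)-coef}, and then bound all the arithmetic factors that appear. By Proposition \ref{sec3:prop:uniruled}, it suffices to show that the right-hand side of \eqref{eq:sec2:main} is a lower bound for $\left|c_{1,0}(E_{k,L})\right|$. The Gamma factor $\Gamma(k)$ and the power $(2\pi)^k$ appear verbatim in both expressions, and $|A_L|=D$ gives the $\sqrt{D}$, so the work is entirely in comparing the remaining pieces: the Dirichlet $L$-values (and $\zeta(2k-1)$ in the odd case) against $\zeta(\lfloor k\rfloor)$, and the local product $\prod_{p\mid 2N} N_{1,0}(p^{w_p})/p^{(2k-1)w_p}$ (suitably normalized) against $C(N,k)$.

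First I would handle the $L$-function factors. In the even case, $\left|c_{1,0}(E_{k,L})\right|$ carries a denominator $L(k,\chi_{4D})$; using the Euler product and the trivial bound $|\chi(p)|\le 1$, one gets $L(k,\chi_{4D})\le \zeta(k)\le\zeta(\lfloor k\rfloor)$ since $k>2$ and $\zeta$ is decreasing — this replaces the $L$-value by $\zeta(\lfloor k\rfloor)$ and only makes the lower bound smaller, which is what we want. In the odd case $k$ is a half-integer, the relevant factor is $L(k-\tfrac12,\chi_{D'})/\zeta(2k-1)$; here $L(k-\tfrac12,\chi_{D'})\le\zeta(k-\tfrac12)=\zeta(\lfloor k\rfloor)$ (note $k-\tfrac12=\lfloor k\rfloor$ is an integer $\ge 2$), while $\zeta(2k-1)\ge 1$, so this whole factor is bounded above by $\zeta(\lfloor k\rfloor)$ and again goes to the denominator. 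This is precisely where the single $\zeta(\lfloor k\rfloor)$ in the statement comes from, uniformly across parities.

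Next I would bound the local factors at primes $p\mid 2N$. This is the step I expect to be the main obstacle, since it requires the lattice-counting estimates for $N_{1,0}(p^{w_p})$ from \cite{BM19}: one needs lower bounds for the normalized local densities, and the bounds depend on how many copies of $U$ split off $L$ (one copy versus two), which is exactly why the statement offers the improved constants $1-p^{-2}$ and $(p^2-1)/(p^2(1-p^{1-2k}))$ in the two-$U$ case. Concretely I would invoke \cite{BM19} to get $N_{1,0}(p^{w_p})/p^{(2k-1)w_p}\ge$ the claimed $p$-factor in $C(N,k)$ — for $p$ odd this is a clean local density computation (with the Kloosterman/Gauss-sum evaluation giving the $(1-p^{-1})$ or $(1-p^{-2})$), and $p=2$ with $w_2=3$ is the delicate case requiring the explicit $2$-adic estimates. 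Multiplying these local lower bounds over $p\mid 2N$ yields $C(N,k)$ exactly as stated.

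Finally, assembling: from Lemma \ref{lemma:sec2:(1,0)-coef} together with the two bounds above,
\[
\left|c_{1,0}(E_{k,L})\right|\ \ge\ \frac{(2\pi)^k}{\sqrt{D}\cdot\Gamma(k)\cdot\zeta(\lfloor k\rfloor)}\cdot C(N,k),
\]
so whenever the hypothesis \eqref{eq:sec2:main} holds we get $4b<\left|c_{1,0}(E_{k,L})\right|$, and Proposition \ref{sec3:prop:uniruled} gives the uniruledness of $\mathcal{F}_L$. The two-$U$ refinement follows by feeding the stronger local bound into the same assembly. One small point to check is that the congruence condition $2k-2+b\equiv 0\bmod 4$ needed for $E_{k,L}$ to be nonzero is automatic here since $k=\tfrac{b}{2}+1$ forces $2k-2+b=2b\equiv 0\bmod 4$ when $b$ is even and, in the odd/half-integral case, the analogous parity condition from \cite{BK01} holds — so the Eisenstein series and formula \eqref{eq:sec2:Kud} are available throughout.
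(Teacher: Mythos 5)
Your overall strategy is exactly the paper's: feed the explicit formula of Lemma \ref{lemma:sec2:(1,0)-coef} into Proposition \ref{sec3:prop:uniruled}, bounding the $L$-values against $\zeta(\lfloor k\rfloor)$ and the local densities against $C(N,k)$ via \cite{BM19}. The even case and the local-factor step are fine (the paper likewise delegates the local bounds to \cite{BM19}*{Lemmas 2.3 and 2.6}).

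However, your treatment of the $L$-function in the odd case has the inequality pointing the wrong way, and this is a genuine gap. In the odd case the factor $L(k-\tfrac12,\chi_{D'})\big/\zeta(2k-1)$ sits in the \emph{numerator} of $\left|c_{1,0}(E_{k,L})\right|$, so to establish
\[
\left|c_{1,0}(E_{k,L})\right|\ \ge\ \frac{(2\pi)^k}{\sqrt{D}\cdot\Gamma(k)\cdot\zeta(\lfloor k\rfloor)}\cdot C(N,k)
\]
you need a \emph{lower} bound on that factor, namely $L(k-\tfrac12,\chi_{D'})\big/\zeta(2k-1)\ \ge\ 1\big/\zeta(\lfloor k\rfloor)$. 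Your argument instead bounds it \emph{above} by $\zeta(\lfloor k\rfloor)$ using $L(s,\chi)\le\zeta(s)$, and the claim that an upper bound on a numerator factor ``goes to the denominator'' of a lower bound is not valid; as written it would only give an upper bound on $\left|c_{1,0}(E_{k,L})\right|$, which is useless here. The missing ingredient is the other half of the two-sided estimate the paper cites from \cite{OS19}*{Lemma 3.9}, namely $\zeta(2s)/\zeta(s)\le L(s,\chi)$ for real characters and $s>1$: applied at $s=k-\tfrac12=\lfloor k\rfloor$ it gives
\[
\frac{L\left(k-\tfrac12,\chi_{D'}\right)}{\zeta(2k-1)}\ \ge\ \frac{\zeta(2k-1)\big/\zeta\left(k-\tfrac12\right)}{\zeta(2k-1)}\ =\ \frac{1}{\zeta\left(\lfloor k\rfloor\right)},
\]
which is exactly the bound required. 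With that substitution your assembly goes through and coincides with the paper's proof; without it the odd-$b$ half of the theorem is not proved.
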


\begin{proof}
For any real Dirichlet character $\chi$ and $s>1$, 
\begin{equation}
\label{sec3:eq1:thm}
\frac{\zeta(2s)}{\zeta(s)}\leq L(s,\chi)\leq \zeta(s),
\end{equation}
see \cite{OS19}*{Lemma 3.9}. Further, by \cite{BM19}*{Lemmas 2.3 and 2.6}
\begin{equation}
\label{sec3:eq2:thm}
\prod_{p\mid 2N}1-\frac{1}{p}\leq\prod_{p\mid 2N}\frac{N_{1,0}(p^{w_p})}{p^{(2k-1)w_p}}\;\;\;\hbox{and}\;\;\;\prod_{p\mid 2N}1-\frac{1}{p^2}\leq\prod_{p\mid 2N}\frac{N_{1,0}(p^{w_p})}{p^{(2k-1)w_p}}
\end{equation}
if $L$ splits off two copies of $U$. Then Lemma \ref{lemma:sec2:(1,0)-coef} and equations \eqref{sec3:eq1:thm} and \eqref{sec3:eq2:thm} yield 
\begin{equation}
\label{sec3:eq:c10(E)}
\frac{\left(2\pi\right)^{k}}{\sqrt{D}\cdot\Gamma\left(k\right)\cdot\zeta\left(\lfloor k\rfloor\right)}\cdot C(N,k)\leq\left|c_{1,0}\left(E_{k,L}\right)\right|,
\end{equation}
and Proposition \ref{sec3:prop:uniruled} gives us the theorem. 
\end{proof}

\begin{table}[hbt!]
\caption{Value of $r(k)=\frac{(2\pi)^k}{\Gamma(k)\cdot\zeta(\lfloor k\rfloor)}$ to the third decimal place for $3\leq b\leq 20$.}
\label{sec3:table:r(k)}
\begin{tabular}{|l|l|l|l|l|l|l|l|l|l|}
\hline
      & $b=3$     & $b=4$     & $b=5$     & $b=6$     & $b=7$ & $b=8$ & $b=9$ & $b=10$ & $b=11$  \\ \hline
$r(k)$ & $45.254$ & $103.177$ & $155.642$ & $240.000$ & $310.318$ & $393.495$ & $452.255$ & $504.000$ & $526.601$ \\ \hline
      & $b=12$     & $b=13$     & $b=14$     & $b=15$     & $b=16$ & $b=17$ & $b=18$ & $b=19$ & $b=20$  \\ \hline
$r(k)$ & $532.495$ & $513.576$ & $480.000$ & $432.083$ & $377.769$ & $320.054$ & $264.000$ & $211.894$ & $165.959$\\ \hline
\end{tabular}
\end{table}

\section{Examples and applications}
\label{sec4}

In this section, we briefly discuss some examples of geometric nature that follow from Theorem \ref{thm:int:main}. As a first application, we recover \cite{GH14}*{Theorem 3.2} together with some extra cases. Here the convention for the signature of $L$ is $(2,b)$, but the analysis is the same.

\begin{corollary}
The stable orthogonal modular variety
\[
\mathcal{F}_L=\mathcal{D}_L\big/\widetilde{\rm{O}}^+\left(L\right)
\]
is uniruled for $L$ as in Table \ref{sec4:table:thm2}.
\end{corollary}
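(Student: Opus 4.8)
The plan is to apply Theorem~\ref{thm:sec2:main} (equivalently Theorem~\ref{thm:int:main}) to each lattice $L$ appearing in Table~\ref{sec4:table:thm2}, and verify that the numerical inequality \eqref{eq:sec2:main} holds in each case. Concretely, for a given $L$ of signature $(2,b)$ — note the signature convention here is flipped, but as the author remarks the analysis is identical since only $b$ and the discriminant-form invariants enter — I would first record the three relevant invariants: the rank $2+b$ (hence $k = \tfrac{b}{2}+1$), the discriminant $D = |A_L|$, and the level $N$. Since each $L$ in the table is assumed to split off at least one copy of $U$ (and I would check whether it in fact splits off two, to use the stronger form of $C(N,k)$), Theorem~\ref{thm:sec2:main} applies directly and reduces the claim to checking $4b < \frac{(2\pi)^k}{\sqrt{D}\,\Gamma(k)\,\zeta(\lfloor k\rfloor)}\cdot C(N,k)$.

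The second step is the actual arithmetic. Using Table~\ref{sec3:table:r(k)}, the quantity $r(k) = \frac{(2\pi)^k}{\Gamma(k)\,\zeta(\lfloor k\rfloor)}$ can be read off for each relevant $b$ in the range $3 \le b \le 20$; outside that range the criterion will typically fail anyway since $r(k)$ decays and $\sqrt{D} \ge 1$. Then I would compute $C(N,k)$ from the prime factorization of $2N$ using the appropriate one of the four product formulas (even/odd $b$, one/two copies of $U$), and finally compare $\tfrac{r(k)}{\sqrt D}\cdot C(N,k)$ against $4b$. For each row of Table~\ref{sec4:table:thm2} this is a finite, explicit check: e.g.\ for small $b$ and small $D$ with $N \in \{1,2\}$, one has $C(N,k)$ close to $1$ (or exactly $1$ when $N=1$ and $b$ is even, since then $2N = 2$ and the lone Euler factor is $1 - \tfrac12 = \tfrac12$ — so one must be careful that $C$ can be as small as $\tfrac12$), and $r(k)$ is in the hundreds, so the inequality is comfortably satisfied. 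I would tabulate, for each $L$, the left side $4b$ and a lower bound for the right side, exhibiting that the gap is strict.

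The step I expect to be the main obstacle is the bookkeeping of the discriminant-form data: correctly reading off $D$ and $N$ for each lattice in Table~\ref{sec4:table:thm2}, and in particular deciding for each $L$ whether it splits off one or two hyperbolic planes, since the stronger constant $C(N,k) = \prod_{p \mid 2N}(1 - p^{-2})$ (resp.\ the odd-$b$ analogue) can be the difference between the criterion succeeding or failing in the tightest cases — those with largest $D$ or $b$ near the edge of the range. A secondary subtlety is that the author's comparison \eqref{sec3:eq:c10(E)} already incorporates the worst-case bounds from \cite{OS19}*{Lemma 3.9} and \cite{BM19}*{Lemmas 2.3 and 2.6}, so for borderline rows it may be necessary to go back to the exact formula in Lemma~\ref{lemma:sec2:(1,0)-coef} and compute $L(k,\chi_{4D})$ (or $L(k-\tfrac12,\chi_{D'})$) and $N_{1,0}(p^{w_p})$ directly rather than using the clean bound; I would flag any such row and handle it by the sharper estimate. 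Assuming all rows pass, the corollary follows; the recovery of \cite{GH14}*{Theorem 3.2} is then the observation that every lattice treated there appears among these rows, and the ``extra cases'' are precisely the rows of Table~\ref{sec4:table:thm2} not covered by loc.\ cit.
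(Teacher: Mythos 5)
Your proposal matches the paper's (implicit) argument exactly: the corollary is presented as a direct row-by-row application of Theorem~\ref{thm:sec2:main}, reading off $(D,N,k)$ for each lattice, using the stronger constant $C(N,k)$ since every entry of Table~\ref{sec4:table:thm2} splits off two copies of $U$, and comparing against $4b$ via Table~\ref{sec3:table:r(k)}. The only slip is your parenthetical claim that $C(N,k)$ equals $1$ when $N=1$ and $b$ is even --- it never does, since the prime $2$ always divides $2N$ (you correct this yourself in the same sentence) --- but this does not affect the argument.
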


\begin{table}[hbt!]
\caption{Examples of uniruled orthogonal modular varieties}
\label{sec4:table:thm2}
\begin{tabular}{|c|c|c|}
\hline
\hbox{Lattice}&$(D,N,k)$&\hbox{Uniruled}\\ \hline
$U^2\oplus A_{2n+1}^{\oplus s}$&$\left((2n+2)^s, 4(n+1),\frac{(2n+1)s}{2}+2\right)$& $\begin{array}{rcl}(n,s)&=&(0,1\leq s\leq 6), (1,1\leq s \leq 3),\\ &&(2,1), (2,2), (3,1)\end{array}  $\\ \hline
$U^2\oplus A_{2n}^{\oplus s}$&$\left((2n+1)^s, 2n+1,ns+2\right)$&$\begin{array}{rcl}(n,s)&=&(1,1\leq s\leq 3), (3\leq n\leq 6, 1)\\&&(2,1), (2,2)\end{array}$\\ \hline
$U^2\oplus D_{2n+1}^{\oplus s}$&$\left(4^s, 8,\frac{(2n+1)s}{2}+2\right)$&$\begin{array}{rcl}(n,s)&=&(3\leq n\leq 8,1),(2,1), (2,2)\end{array}$ \\ \hline
$U^2\oplus D_{2n}^{\oplus s}$&$\begin{array}{cl}\left(4^s, 4,ns+2\right)&\hbox{ if }n\equiv 1\mod 2\\\left(4^s, 2,ns+2\right)&\hbox{ if }n\equiv 0\mod 2\end{array}$&$\begin{array}{rcl}(n,s)&=& (2\leq n\leq 8, 1),(2,2), (3,2)\end{array}$ \\ \hline
$U^2\oplus E_8^{\oplus s}$&$\left(1,1,4s+2\right)$& $s=1,2$ \\ \hline
$U^2\oplus E_7^{\oplus s}$&$\left(2^s, 4, \frac{7s}{2}+2\right)$& $s=1,2$ \\ \hline
$U^2\oplus E_6^{\oplus s}$&$\left(3^s,3,3s+2\right)$& $s=1,2$ \\ \hline
$U^2\oplus A_1(-d)$&$\left(2d,4d,\frac{5}{2}\right)$& $1\leq d\leq4$ \\ \hline
$U^2\oplus E_8(-1)\oplus A_1(-d)$&$\left(2d,4d,\frac{13}{2}\right)$& $1\leq d\leq 38$ \\ \hline
\end{tabular}
\end{table}

\begin{remark}
Some of the varieties $\mathcal{F}_L$ in Table \ref{sec4:table:thm2} when $L$ splits off two copies of $U$ were known to be rational or unirational see \cite{Rou22}*{Table 16},  \cite{WW24}*{Theorem 5.4}, and \cite{CMW24}.
\end{remark}

In light of Proposition \ref{sec3:prop:deg_neg}, the uniruledness criterion in Theorem \ref{thm:sec2:main} can be improved if one takes into account not only the divisor $H_{1,0}$ (resp. $H_{-1,0}$ if the signature of $L$ is assumed to be $(2,b)$) but the full branch divisor of $\pi:\mathcal{D}_L\longrightarrow \mathcal{F}_L$. Further, the bound in Theorem \ref{thm:int:main} follows from the inequality \eqref{sec3:eq:c10(E)}, yet this is not always optimal and by calculating $\left|c_{1,0}(E_{k,L})\right|$ using the formulas in Lemma \ref{lemma:sec2:(1,0)-coef} one can often obtain more cases. 

A natural collection of examples are lattice-polarized K3 surfaces, see \cite{Dol96}. Let $L$ be a lattice of signature $(1,n)$. An $L$-polarized K3 surface is a pair $(X,j)$ where $j$ is a primitive embedding $j:L\hookrightarrow {\rm{Pic}}\left(X\right)$. For instance, when $L=E_8(-2)\oplus A_1(d)$, then $L$-polarized K3 surfaces are known as {\it{standard Nikulin surfaces}} of degree $2d$, see \cite{vGS07} for details. One can see them as K3 surfaces admitting a symplectic involution. Non-standard Nikulin surfaces appear only in degree congruent to $0\mod 4$. Both standard and non-standard Nikulin surfaces form $11$-dimensional moduli spaces that display a remarkable connection with moduli spaces of Prym curves analogous to Mukai's classical relation K3 surfaces of low degree and curves of low genus, see \cites{FV12, FV16, KLV21}. The moduli space of standard Nikulin surfaces $\mathcal{N}_{2d}^s$ is (uni)rational for $d\leq 7$, see \cites{FV12,FV16, Ver16} and the moduli space of non-standard Nikulin surfaces $\mathcal{N}_{2d}^{ns}$ is (uni)rational for $d\leq 10$ even, see \cite{KLV20}*{Theorem 1.2}. Further, both are of general type for $d$ large enough, see \cite{Ma18}*{Theorem 1.3}. Our methods do not give new uniruledness results beyond the cases for which there are (uni)rational parameterizations. 

\subsection{Moduli of K3 surfaces with finite automorphism group}
Let $G$ a finite group. A similar source of interesting moduli spaces of special K3 surfaces is those whose general element has a fixed automorphism group of finite order. Algebraic K3 surfaces of Picard number $\rho\geq3$ with finite automorphism group were classified by Nikulin \cites{Nik79, Nik81, Nik84} and Vinberg \cite{Vin07}, see also \cite{Kon89}, and more recently \cite{Rou22}*{Table 16}. Out of this classification, there are $118$ possible N\'{e}ron-Severi lattices $L$ and all of them admit a primitive embedding $L\hookrightarrow \Lambda_{K3}=U^{\oplus 3}\oplus E_8^{\oplus 2}$ that is unique up to ${\rm{O}}^+\left(\Lambda_{K3}\right)$. This leads to $118$ irreducible moduli spaces partially compactified by
\begin{equation}
\label{sec4:eq:M_L}
\mathcal{M}_L=\mathcal{D}_{L^{\perp}}\big/{\rm{O}}^+\left(\Lambda_{K3},L\right),
\end{equation}
where ${\rm{O}}^+\left(\Lambda_{K3},L\right)\subset{\rm{O}}^+\left(L^\perp\right)$ are restrictions of isometries on $\Lambda_{K3}$ fixing $L$, see \cite{Rou22}*{Proposition 2.12}. Out of the $118$ Nikulin--Vinberg moduli spaces, $63$ are known to be unirational, see \cite{Rou22} and the reference therein. Several of them parameterize K3 surfaces arising as minimal resolutions of anti-canonical divisors in weighted projective threefolds $\mathbb{P}(\underline{a})$. There are $95$ possible weights $\underline{a}$ where the resolution is a K3 leading to $95$ moduli spaces. Several of them satisfy the condition that the general K3 has finite automorphism group, falling into one of the $118$ Nikulin--Vinberg spaces. The possible weights were classified by Reid (unpublished), see also \cite{Yon90} and \cite{Bel02}. Note that they are all unirational \cite{Rou22}*{Corollary 2.9}, indeed if $d(\underline{a})$ is the degree of $-K_{\mathbb{P}(\underline{a})}$, then the corresponding moduli space $\mathcal{M}_L$ is dominated by the linear system $\left|\mathcal{O}_{\mathbb{P}(\underline{a})}\left(d(\underline{a})\right)\right|$. The $55$ Nikulin--Vinberg moduli spaces for which uniruledness/unirationality was not known are listed in Table \ref{sec4:table:thm4.1} in decreasing order of dimension. The recent result in \cite{CMW24}*{Tables 5 and 6} establishes the rationality of the last two examples.

\begin{theorem}[= Theorem \ref{sec1:thm:Main2}]
\label{sec4:thm:NVmoduli}
All $118$ Nikulin--Vinberg moduli spaces parameterizing projective K3 surfaces of Picard rank $\rho\geq 3$ with fixed finite automorphism group are uniruled with $12$ possible exceptions. See Table \ref{sec4:table:thm4.1}. 
\end{theorem}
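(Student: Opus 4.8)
The plan is to deduce the statement, for each of the $118$ Néron--Severi lattices $L$ in the Nikulin--Vinberg classification, from Theorem \ref{thm:int:main} (sharpened where needed) applied to the orthogonal complement $M=L^{\perp}$ in $\Lambda_{K3}=U^{\oplus3}\oplus E_8^{\oplus2}$. First I would record the invariants of $M$: since $L$ has signature $(1,\rho-1)$ and $\Lambda_{K3}$ has signature $(3,19)$, the lattice $M$ has signature $(2,b)$ with $b=20-\rho$ and $k=\frac{b}{2}+1=11-\frac{\rho}{2}$; and since $\Lambda_{K3}$ is unimodular the discriminant forms of $L$ and $M$ agree up to sign, so $M$ has the same discriminant $D$ and level $N$ as $L$, both of which can be read off from \cite{Rou22}*{Table 16}. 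The criterion applies for $b\geq3$, i.e. $\rho\leq17$; the few cases with $b\leq2$ lie outside its range and are treated separately, two of the lowest-dimensional entries of Table \ref{sec4:table:thm4.1} being in fact rational by \cite{CMW24}.

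Next I would reduce to showing $\mathcal{F}_M$ uniruled. The gluing description of primitive embeddings into a unimodular lattice gives $\widetilde{\rm{O}}^+(M)\subseteq{\rm{O}}^+(\Lambda_{K3},L)$: any $g\in\widetilde{\rm{O}}^+(M)$ acts trivially on $A_M$, hence glues with $\Id_L$ to an isometry of $\Lambda_{K3}$ fixing $L$ pointwise and restricting to $g$ on $M=L^{\perp}$, and this isometry lies in ${\rm{O}}^+$ because $g$ preserves the orientation of the positive-definite part of $M$. Therefore the natural finite morphism $\mathcal{F}_M=\mathcal{D}_M\big/\widetilde{\rm{O}}^+(M)\longrightarrow\mathcal{D}_{L^{\perp}}\big/{\rm{O}}^+(\Lambda_{K3},L)=\mathcal{M}_L$ of \eqref{sec4:eq:M_L} is surjective, and uniruledness of $\mathcal{F}_M$ implies that of $\mathcal{M}_L$. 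I would then check that $M$ splits off a copy of $U$, as required for the modularity input and for Theorem \ref{thm:int:main}; in fact for all the lattices at hand $M$ splits off two copies of $U$, by Nikulin's embedding criterion \cite{Nik79} applied to $L\hookrightarrow U^{\oplus3}\oplus E_8^{\oplus2}$ (the rank and the discriminant length of $L$ leave the necessary room). With this in place, the core is the finite verification of
\[
4b<\frac{(2\pi)^{k}}{\sqrt{D}\cdot\Gamma(k)\cdot\zeta(\lfloor k\rfloor)}\cdot C(N,k),
\]
with the $U^{\oplus2}$-version of $C(N,k)$, across the $118$ lattices; $63$ of them are already known to be unirational \cite{Rou22}, so only the remaining $55$ genuinely require the inequality.

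For the lattices where this crude bound fails I would sharpen in two steps, along the lines indicated in the remark after Theorem \ref{thm:sec2:main}. First, replace the chain of estimates \eqref{sec3:eq:c10(E)} by the exact value of $\left|c_{1,0}(E_{k,L})\right|$ given by Lemma \ref{lemma:sec2:(1,0)-coef}, computing the local densities $N_{1,0}(p^{w_p})$ and the relevant $L$-values directly for the finitely many bad primes; since the inequality \eqref{sec3:eq1:thm} is lossy, this already settles several borderline cases through Proposition \ref{sec3:prop:uniruled}. Second, instead of retaining only $\frac14 H_{1,0}\leq\frac12{\rm{Br}}(\pi)$, use the full reduced branch divisor of $\pi:\mathcal{D}_M\to\mathcal{F}_M$: each primitive $v\in M^{\vee}$ with $\sigma_v$ or $-\sigma_v$ in $\widetilde{\rm{O}}^+(M)$ contributes a reflective Heegner divisor, and summing the corresponding Baily--Borel degrees against \eqref{eq:sec2:Kud} and \eqref{eq:sec3:K} yields a sharper upper bound for ${\rm{deg}}(K_{\mathcal{F}_M})$ to feed into Proposition \ref{sec3:prop:deg_neg}. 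After both refinements, exactly $12$ lattices remain for which the negativity of ${\rm{deg}}(K_{\mathcal{F}_M})$ cannot be deduced; these are the entries flagged in Table \ref{sec4:table:thm4.1}.

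The hard part is this last refinement: for the most resistant lattices neither Lemma \ref{lemma:sec2:(1,0)-coef} nor the contribution of the obvious roots suffices, and one must determine precisely which vectors of $M$ (equivalently, which cosets in $A_M$ together with which norms) give reflections lying in $\widetilde{\rm{O}}^+(M)$ — a delicate computation with the discriminant form that is sensitive to the individual lattice — in order to decide whether the $12$ residual moduli spaces are genuine limitations of the method. The remainder is bookkeeping: compiling the $118$-entry table of $(D,N,b)$, the number of hyperbolic summands, the right-hand side of the criterion, and the sharpened bound where necessary, which yields the stated count.
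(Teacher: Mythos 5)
Your proposal follows essentially the same route as the paper: reduce $\mathcal{M}_L$ to $\mathcal{F}_{L^{\perp}}$ via the inclusion $\widetilde{\rm{O}}^+\left(L^{\perp}\right)\subseteq{\rm{O}}^+\left(\Lambda_{K3},L\right)$, verify the inequality of Theorem \ref{thm:int:main} (with the two-copies-of-$U$ constant $C(N,k)$) for most of the $55$ open cases, and fall back on the exact value of $c_{1,0}\left(E_{k,L^{\perp}}\right)$ from Lemma \ref{lemma:sec2:(1,0)-coef} together with Proposition \ref{sec3:prop:uniruled} for the borderline ones. The one divergence is your proposed second refinement via the full branch divisor: the paper does not use it --- the $12$ exceptions are exactly what remains after the exact-coefficient step --- so that extra (and, as you acknowledge, delicate) reflective-vector computation is not needed to reach the stated count. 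One small correction: the two entries settled by \cite{CMW24} have $b=5,6$, well within the range $b\geq3$ (the paper also covers them by Theorem \ref{thm:int:main}); the lattices genuinely outside the criterion's range ($\rho\geq 18$) are among the $63$ cases already known to be unirational.
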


\begin{proof}
Most of them follow immediately from Theorem \ref{thm:sec2:main}. All of them referring to the main theorem in the last column on Table \ref{sec4:table:thm4.1} are obtained in the same way. As an example, consider the case $L=U\oplus A_1^{\oplus 3}$, this is example $(12)$ in the Table. Then $L$ admits a primitive embedding (unique up to isometries) into $\Lambda_{K3}$ whose orthogonal complement splits off two copies of $U$, this follows from the fact that there is a primitive embedding of $A_1^{\oplus 3}$ into $E_8^{\oplus 2}$. Since the discriminant group $\left(A_L,q_L\right)$ of $L$ and $\left(A_{L^{\perp}},q_{L^\perp}\right)$ of $L^{\perp}$ are isomorphic up to sign, one computes that the discriminant, level, and associated weight for $L^{\perp}$ is given by $\left(D,N,k\right)=\left(8,4,\frac{17}{2}\right)$. Then 
\[
\frac{(2\pi)^{\frac{17}{2}}}{\sqrt{8}\cdot\Gamma\left(\frac{17}{2}\right)\cdot\zeta(8)}=152.7645688\ldots\;\;\;\hbox{and}\;\;\; C(N,k)=\prod_{p\mid 8}\frac{p^2-1}{p^2(1-p^{-16})}=\frac{16384}{21845}. 
\]
In particular
\[
60=4\cdot \dim\left(\mathcal{M}_L\right)<\frac{(2\pi)^{\frac{17}{2}}}{\sqrt{8}\cdot\Gamma\left(\frac{17}{2}\right)\cdot\zeta(8)}\cdot C(N,k).
\]
In examples $(1)-(11), (13), (14), (16), (17), (18), (46)$, and $(49)$, the inequality \eqref{eq:sec2:main} is not satisfied, yet the inequality of Proposition \ref{sec3:prop:uniruled} still holds for $(1), (9), (13), (16), (18),$ and $(49)$. All these are obtained by computing $c_{1,0}(E_{k,L^\perp})$ explicitly\footnote{The computation of Fourier coefficients of vector-valued Eisenstein series $E_{k,L}$ has been implemented in \cite{weilrep}.}, see Lemma \ref{lemma:sec2:(1,0)-coef}. We illustrate the argument by treating example (1). In this case \cite{Rou22a}*{Section 3.4} the Gram matrix of the Ner\'on--Severi lattice is
\[
L={\rm{NS}}\left(X\right)\cong S_4=\left(\begin{array}{ccc}2&1&2\\1&-2&1\\2&1&-2\end{array}\right),
\]
it has signature $(1,2)$, and via the identification $\left(A_L,-q_L\right)\cong\left(A_{L^\perp},q_{L^\perp}\right)$ one has that $A_{L^\perp}\cong\mathbb{Z}\big/20\mathbb{Z}$ with generator of square $-\frac{3}{20}$. Recall that the Weil representation only depends on the quadratic module $\left(A_{L^\perp},q_{L^\perp}\right)$. One can take as such the discriminant group of $U\oplus S_4(-1)$. In this case one computes
\[
c_{1,0}\left(E_{\frac{19}{2},L^{\perp}}\right)=-\frac{5912665925814}{82295676409}=-71.846\ldots
\]
and by Proposition \ref{sec3:prop:uniruled} one concludes uniruledness.
\end{proof}


\begin{table}[hbt!]
\caption{List of $55$ moduli spaces parameterizing lattice-polarized K3 surfaces with $\rho\geq 3$ whose general element has a fixed finite automorphism group where no unirational parametrization was known. The last column lists those cases where our methods give us uniruledness. We follow the notation in \cite{Rou22}. The rationality of the last two examples was recently established in \cite{CMW24}.}
\label{sec4:table:thm4.1}
\begin{tabular}{|c|c|c|c|c|c|}
\hline
&\hbox{${\rm{NS}}(X)$}&$(D,N,k)$ of $L^\perp$&${\rm{Aut}}(X)$&$\dim\left(\mathcal{M}_L\right)$ &\hbox{Uniruled}\\ \hline
$1$&$S_4$&$\left(20,40,\frac{19}{2}\right)$&$\mathbb{Z}\big/2\mathbb{Z}$&$17$&Prop. \ref{sec3:prop:uniruled} \\ \hline
$2$&$S_{1,1,6}$&$\left(72,36,\frac{19}{2}\right)$&$\mathbb{Z}\big/2\mathbb{Z}$&$17$&? \\ \hline
$3$&$S_{1,1,8}$&$\left(128,64,\frac{19}{2}\right)$&$\{1\}$&$17$&? \\ \hline
$4$&$S_{1,9,1}$&$\left(162,108,\frac{19}{2}\right)$&$\{1\}$&$17$&? \\ \hline
$5$&$S_{7,1,1}$&$\left(98,14,\frac{19}{2}\right)$&$\{1\}$&$17$&? \\ \hline
$6$&$S_{10,1,1}$&$\left(200,20,\frac{19}{2}\right)$&$\mathbb{Z}\big/2\mathbb{Z}$&$17$&? \\ \hline
$7$&$S_{12,1,1}$&$\left(288,12,\frac{19}{2}\right)$&$\{1\}$&$17$&? \\ \hline
$8$&$S_{4,1,2}'$&$\left(32,8,\frac{19}{2}\right)$&$\mathbb{Z}\big/2\mathbb{Z}$&$17$&? \\ \hline
$9$&$L(24)$&$\left(28,14,9\right)$&$\mathbb{Z}\big/2\mathbb{Z}$&$16$&Prop. \ref{sec3:prop:uniruled} \\ \hline
$10$&$L(27)$&$\left(60,30,9\right)$&$\mathbb{Z}\big/2\mathbb{Z}$&$16$&? \\ \hline
$11$&$[4]\oplus [-4]\oplus A_2(-1)$&$\left(48,24,9\right)$&$\mathbb{Z}\big/2\mathbb{Z}$&$16$&? \\ \hline
$12$&$U\oplus A_1(-1)^{\oplus 3}$&$\left(8,4,\frac{17}{2}\right)$&$\mathbb{Z}\big/2\mathbb{Z}$&$15$&Thm. \ref{thm:sec2:main} \\ \hline
$13$&$U(2)\oplus A_1(-1)^{\oplus 3}$&$\left(32,4,\frac{17}{2}\right)$&$\mathbb{Z}\big/2\mathbb{Z}$&$15$&Prop. \ref{sec3:prop:uniruled} \\ \hline
$14$&$U(4)\oplus A_1(-1)^{\oplus 3}$&$\left(128,4,\frac{17}{2}\right)$&$\mathbb{Z}\big/2\mathbb{Z}$&$15$&? \\ \hline
$15$&$[4]\oplus D_4(-1)$&$\left(16,8,\frac{17}{2}\right)$&$\mathbb{Z}\big/2\mathbb{Z}$&$15$&Thm. \ref{thm:sec2:main} \\ \hline
$16$&$[8]\oplus D_4(-1)$&$\left(32,16,\frac{17}{2}\right)$&$\mathbb{Z}\big/2\mathbb{Z}$&$15$&Prop. \ref{sec3:prop:uniruled} \\ \hline
$17$&$[16]\oplus D_4(-1)$&$\left(64,32,\frac{17}{2}\right)$&$\mathbb{Z}\big/2\mathbb{Z}$&$15$&? \\ \hline
$18$&$U(4)\oplus D_4(-1)$&$\left(64,4,8\right)$&$\mathbb{Z}\big/2\mathbb{Z}$&$14$&Prop. \ref{sec3:prop:uniruled} \\ \hline
$19$&$U\oplus A_4(-1)$&$\left(5,5,8\right)$&$\mathbb{Z}\big/2\mathbb{Z}$&$14$&Thm. \ref{thm:sec2:main} \\ \hline
$20$&$U\oplus A_1(-1)\oplus A_3(-1)$&$\left(8,8,8\right)$&$\mathbb{Z}\big/2\mathbb{Z}$&$14$&Thm. \ref{thm:sec2:main} \\ \hline
$21$&$U\oplus A_2(-1)^{\oplus 2}$&$\left(9,3,8\right)$&$\mathbb{Z}\big/2\mathbb{Z}$&$14$&Thm. \ref{thm:sec2:main} \\ \hline
$22$&$U\oplus A_1(-1)^{\oplus 2}\oplus A_2(-1)$&$\left(12,12,8\right)$&$\mathbb{Z}\big/2\mathbb{Z}$&$14$&Thm. \ref{thm:sec2:main} \\ \hline
$23$&$U\oplus A_1(-1)^{\oplus 4}$&$\left(16,4,8\right)$&$\mathbb{Z}\big/2\mathbb{Z}$&$14$&Thm. \ref{thm:sec2:main} \\ \hline
$24$&$U\oplus D_4(-1)\oplus A_1(-1)$&$\left(8,4,\frac{15}{2}\right)$&$\mathbb{Z}\big/2\mathbb{Z}$&$13$&Thm. \ref{thm:sec2:main} \\ \hline
$25$&$U\oplus A_1(-1)\oplus A_2(-1)^{\oplus 2}$&$\left(18,12,\frac{15}{2}\right)$&$\mathbb{Z}\big/2\mathbb{Z}$&$13$&Thm. \ref{thm:sec2:main} \\ \hline
$26$&$U\oplus A_1(-1)^{\oplus 2}\oplus A_3(-1)$&$\left(16,8,\frac{15}{2}\right)$&$\mathbb{Z}\big/2\mathbb{Z}$&$13$&Thm. \ref{thm:sec2:main} \\ \hline
$27$&$U\oplus A_2(-1)\oplus A_3(-1)$&$\left(12,24,\frac{15}{2}\right)$&$\mathbb{Z}\big/2\mathbb{Z}$&$13$&Thm. \ref{thm:sec2:main} \\ \hline
$28$&$U\oplus A_1(-1)\oplus A_4(-1)$&$\left(10,20,\frac{15}{2}\right)$&$\mathbb{Z}\big/2\mathbb{Z}$&$13$&Thm. \ref{thm:sec2:main} \\ \hline
$29$&$U\oplus A_5(-1)$&$\left(6,12,\frac{15}{2}\right)$&$\mathbb{Z}\big/2\mathbb{Z}$&$13$&Thm. \ref{thm:sec2:main} \\ \hline
$30$&$U\oplus D_6(-1)$&$\left(4,4,7\right)$&$\mathbb{Z}\big/2\mathbb{Z}$&$12$&Thm. \ref{thm:sec2:main} \\ \hline
$31$&$U\oplus D_4(-1)\oplus A_1(-1)^{\oplus 2}$&$\left(16,4,7\right)$&$\mathbb{Z}\big/2\mathbb{Z}$&$12$&Thm. \ref{thm:sec2:main} \\ \hline
$32$&$U\oplus A_2(-1)^{\oplus 3}$&$\left(27,8,7\right)$&$\mathbb{Z}\big/2\mathbb{Z}$&$12$&Thm. \ref{thm:sec2:main} \\ \hline
$33$&$U\oplus A_3(-1)^{\oplus 2}$&$\left(16,8,7\right)$&$\mathbb{Z}\big/2\mathbb{Z}$&$12$&Thm. \ref{thm:sec2:main} \\ \hline
$34$&$U\oplus A_2(-1)\oplus A_4(-1)$&$\left(15,30,7\right)$&$\mathbb{Z}\big/2\mathbb{Z}$&$12$&Thm. \ref{thm:sec2:main} \\ \hline
$35$&$U\oplus A_1(-1)\oplus A_5(-1)$&$\left(12,12,7\right)$&$\mathbb{Z}\big/2\mathbb{Z}$&$12$&Thm. \ref{thm:sec2:main} \\ \hline
$36$&$U\oplus A_6(-1)$&$\left(7,7,7\right)$&$\mathbb{Z}\big/2\mathbb{Z}$&$12$&Thm. \ref{thm:sec2:main} \\ \hline
$37$&$U\oplus D_5(-1)\oplus A_1(-1)$&$\left(8,4,7\right)$&$\mathbb{Z}\big/2\mathbb{Z}$&$12$&Thm. \ref{thm:sec2:main} \\ \hline
$38$&$U\oplus D_6(-1)\oplus A_1(-1)$&$\left(8,4,\frac{13}{2}\right)$&$\mathbb{Z}\big/2\mathbb{Z}$&$11$&Thm. \ref{thm:sec2:main} \\ \hline
$39$&$U\oplus D_4(-1)\oplus A_1(-1)^{\oplus 3}$&$\left(32,4,\frac{13}{2}\right)$&$\mathbb{Z}\big/2\mathbb{Z}$&$11$&Thm. \ref{thm:sec2:main} \\ \hline
$40$&$U\oplus A_7(-1)$&$\left(8,16,\frac{13}{2}\right)$&$\mathbb{Z}\big/2\mathbb{Z}$&$11$&Thm. \ref{thm:sec2:main} \\ \hline
$41$&$U\oplus D_4(-1)\oplus A_3(-1)$&$\left(16,8,\frac{13}{2}\right)$&$\mathbb{Z}\big/2\mathbb{Z}$&$11$&Thm. \ref{thm:sec2:main} \\ \hline
$42$&$U\oplus D_5(-1)\oplus A_2(-1)$&$\left(12,24,\frac{13}{2}\right)$&$\mathbb{Z}\big/2\mathbb{Z}$&$11$& Thm. \ref{thm:sec2:main} \\ \hline
$43$&$U\oplus D_7(-1)$&$\left(4,8,\frac{13}{2}\right)$&$\mathbb{Z}\big/2\mathbb{Z}$&$11$& Thm. \ref{thm:sec2:main} \\ \hline
\end{tabular}
\end{table}

\begin{table}[hbt!]
\begin{tabular}{|c|c|c|c|c|c|}
\hline
&\hbox{${\rm{NS}}(X)$}&$(D,N,k)$ of $L^\perp$&${\rm{Aut}}(X)$&$\dim\left(\mathcal{M}_L\right)$ &\hbox{Uniruled}\\ \hline
$44$&$U\oplus D_8(-1)$&$\left(4,2,6\right)$&$\mathbb{Z}\big/2\mathbb{Z}$&$10$&Thm. \ref{thm:sec2:main} \\ \hline
$45$&$U\oplus D_6(-1)\oplus A_1(-1)^{\oplus 2}$&$\left(16,4,6\right)$&$\mathbb{Z}\big/2\mathbb{Z}$&$10$&Thm. \ref{thm:sec2:main} \\ \hline
$46$&$U\oplus A_1(-1)^{\oplus 8}$&$\left(256,4,6\right)$&$\left(\mathbb{Z}\big/2\mathbb{Z}\right)^2$&$10$&? \\ \hline
$47$&$U\oplus D_8(-1)\oplus A_1(-1)$&$\left(8,4,\frac{11}{2}\right)$&$\mathbb{Z}\big/2\mathbb{Z}$&$9$&Thm. \ref{thm:sec2:main} \\ \hline
$48$&$U\oplus D_4(-1)^{\oplus 2}\oplus A_1(-1)$&$\left(32,4,\frac{11}{2}\right)$&$\mathbb{Z}\big/2\mathbb{Z}$&$9$&Thm. \ref{thm:sec2:main} \\ \hline
$49$&$U\oplus D_4(-1)\oplus A_1(-1)^{\oplus 5}$&$\left(128,4,\frac{11}{2}\right)$&$\left(\mathbb{Z}\big/2\mathbb{Z}\right)^2$&$9$&Prop. \ref{sec3:prop:uniruled} \\ \hline
$50$&$U\oplus E_8(-1)\oplus A_1(-1)^{\oplus 2}$&$\left(4,4,5\right)$&$\mathbb{Z}\big/2\mathbb{Z}$&$8$&Thm. \ref{thm:sec2:main} \\ \hline
$51$&$U\oplus D_8(-1)\oplus A_1(-1)^{\oplus 2}$&$\left(16,4,5\right)$&$\mathbb{Z}\big/2\mathbb{Z}$&$8$&Thm. \ref{thm:sec2:main} \\ \hline
$52$&$U\oplus D_4(-1)^{\oplus 2}\oplus A_1(-1)^{\oplus 2}$&$\left(64,4,5\right)$&$\left(\mathbb{Z}\big/2\mathbb{Z}\right)^2$&$8$&Thm. \ref{thm:sec2:main}\\ \hline
$53$&$U\oplus E_8(-1)\oplus A_3(-1)$&$\left(4,8,\frac{9}{2}\right)$&$\mathbb{Z}\big/2\mathbb{Z}$&$7$&Thm. \ref{thm:sec2:main} \\ \hline
$54$&$U\oplus E_8(-1)\oplus A_1(-1)^{\oplus 4}$&$\left(16,4,4\right)$&$\left(\mathbb{Z}\big/2\mathbb{Z}\right)^2$&$6$&\cite{CMW24}, Thm. \ref{thm:sec2:main} \\ \hline
$55$&$U\oplus E_8(-1)\oplus D_4(-1)\oplus A_1(-1)$&$\left(8,4,\frac{7}{2}\right)$&$\left(\mathbb{Z}\big/2\mathbb{Z}\right)^2$&$5$& \cite{CMW24}, Thm. \ref{thm:sec2:main} \\ \hline
\end{tabular}
\end{table}

\subsection{Moduli of Kummer surfaces}

Let $\mathcal{A}_d$ be the moduli of $(1,d)$-polarized abelian surfaces. It is known that $\mathcal{A}_d$ is unirational for $1\leq d\leq12$ and $d=14,16,18,20$, see \cites{Igu72, HM73, LB89, OGr89, GP98, MS01, GP01, GP01b, GP11}. Further, $\mathcal{A}_d$ is not unirational for $d\geq37$, $d=17, 19, 21, 22, 23$, and some other values in the range $24\leq d\leq 36$, see \cite{Gri95}*{Theorem 1}. Let $L_d=U^{\oplus 2}\oplus A_1(-d)$. The moduli space $\mathcal{A}_d$ admits a finite map
\begin{equation}
\label{sec4:eq:AdAd*}
\eta:\mathcal{A}_d\longrightarrow \mathcal{A}_d^*:=\mathcal{D}_{L_d}\big/{\rm{O}}^+\left(L_d\right)
\end{equation}
that factors through $\mathcal{F}_{L_d}$. Further, the {\it{minimal Siegel modular threefold}} $\mathcal{A}_{d}^*$ can be seen as the moduli space of Kummer surfaces \cite{GH98}*{Theorem 1.5}. The unirationality of $\mathcal{A}_d$ implies unirationality (and therefore unirulednes) of $\mathcal{A}_d^*$. In \cite{GH14}*{Theorem 3.1} the uniruledness of $\mathcal{A}_{21}^*$ is obtained by constructing a reflective modular form with respect to ${\rm{O}}^+\left(L_{21}\right)$, cf. \cite{GN02}*{Theorem 2.2.3}. Note that in this case $\mathcal{A}_{21}$ has non-negative Kodaira dimension, see \cites{Gri94, Gri95}. We obtain one more case where $\mathcal{A}_{d}^*$ has negative Kodaira dimension.

\begin{proposition}
\label{sec4:prop:F13}
The modular variety $\mathcal{F}_{L_{13}}$ is uniruled. 
\end{proposition}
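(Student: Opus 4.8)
The plan is to apply Proposition \ref{sec3:prop:uniruled} to the lattice $L=L_{13}=U^{\oplus 2}\oplus A_1(-13)$. It has signature $(2,3)$, so $\mathcal{F}_{L_{13}}$ has dimension $b=3$ and the relevant weight is $k=\frac{1}{2}{\rm{rk}}(L)=\frac{5}{2}$; its discriminant group is $A_L\cong\mathbb{Z}/26\mathbb{Z}$, generated by the class coming from the rank-one summand, of square $-\frac{1}{52}\bmod\mathbb{Z}$, so $D=26$ and the level is $N=52$ (consistent with the row $U^{\oplus 2}\oplus A_1(-d)$ of Table \ref{sec4:table:thm2}). As $L$ splits off two copies of $U$, Theorem \ref{thm:sec2:main} applies with these invariants, but its right-hand side $\frac{(2\pi)^{5/2}}{\sqrt{26}\,\Gamma(5/2)\,\zeta(2)}\cdot C(52,\frac{5}{2})$ is only about $7.06$, well below $4b=12$. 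So the simple criterion does not suffice, and I would instead compute $c_{1,0}(E_{5/2,L_{13}})$ exactly using Lemma \ref{lemma:sec2:(1,0)-coef} in its odd-signature form.

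That computation has three arithmetic ingredients. With $\det(L_{13})=-26$ one gets $D'=2(-1)^{k+1/2}\det(L_{13})=52$, and the Kronecker symbol $\chi_{52}$ coincides with the real primitive character $\psi_{13}$ modulo $13$ on odd integers and vanishes on even ones, so $L(k-\frac{1}{2},\chi_{52})=L(2,\chi_{52})=(1+2^{-2})L(2,\psi_{13})=\frac{5}{4}L(2,\psi_{13})$. Next, the functional equation for the even character $\psi_{13}$ gives the closed form $L(2,\psi_{13})=\frac{\pi^2}{13^{3/2}}\,B_{2,\psi_{13}}=\frac{4\pi^2\sqrt{13}}{169}$, with $B_{2,\psi_{13}}=\frac{1}{13}\sum_{a=1}^{12}a^2\psi_{13}(a)=4$. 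Finally, for the form $q_{L_{13}}=x_1y_1+x_2y_2-13z^2$ one counts the local densities $N_{1,0}(2^3)=3840=\frac{15}{16}\cdot 2^{12}$ (a finite count modulo $8$) and $N_{1,0}(13)=28392=\frac{168}{169}\cdot 13^4$ (which reduces to counting the solutions of $uv+st\equiv 1\bmod 13$); since $2k-1=4$, the Euler factor at $2$ equals $\frac{3840}{(1-2^{-4})\,2^{12}}=1$ and the one at $13$ equals $\frac{28392}{(1-13^{-4})\,13^4}=\frac{169}{170}$.

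Substituting into Lemma \ref{lemma:sec2:(1,0)-coef}, the factor $169$ and all powers of $\pi$ cancel against $\Gamma(\frac{5}{2})=\frac{3}{4}\sqrt{\pi}$ and $\zeta(4)=\frac{\pi^4}{90}$, so that
\[
\bigl|c_{1,0}(E_{5/2,L_{13}})\bigr|=\frac{(2\pi)^{5/2}\cdot\frac{5\pi^2\sqrt{13}}{169}}{\sqrt{26}\,\Gamma(\frac{5}{2})\,\zeta(4)}\cdot\frac{169}{170}=\frac{240}{17}\approx 14.12,
\]
which exceeds $4b=12$, and Proposition \ref{sec3:prop:uniruled} gives the uniruledness of $\mathcal{F}_{L_{13}}$. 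The delicate point is that the inequality is satisfied only narrowly ($\frac{240}{17}$ versus $12$), so the two local density counts and the value of the Dirichlet $L$-function must be handled carefully, and the sign conventions — the sign of $D'$, equivalently whether one works with $L_{13}$ or $L_{13}(-1)$ and with $c_{1,0}$ or $c_{-1,0}$ — must be fixed in accordance with those of Section \ref{sec4}; it is prudent to cross-check the value of $c_{1,0}(E_{5/2,L_{13}})$ against the implementation \cite{weilrep}.
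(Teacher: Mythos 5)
Your strategy is exactly the paper's: observe that the crude bound of Theorem \ref{thm:sec2:main} fails for $L_{13}$ (your estimate $\approx 7.06 < 12$ is right), then compute $c_{1,0}$ exactly via Lemma \ref{lemma:sec2:(1,0)-coef} and invoke Proposition \ref{sec3:prop:uniruled}. However, your final value $\left|c_{1,0}\right|=\frac{240}{17}\approx 14.12$ disagrees with the paper's $\frac{264}{17}\approx 15.53$, and the discrepancy sits precisely at the point you yourself flagged as delicate: the sign convention in the local density at $p=2$. The machinery of Sections \ref{sec2}--\ref{sec3} (Heegner divisors $H_{1,0}$ of reflective vectors, Kudla's formula, the weight-$\frac{5}{2}$ Eisenstein series) is set up for a lattice of signature $(b,2)$, so for $L_{13}=U^{\oplus 2}\oplus A_1(-13)$ of signature $(2,3)$ one must work with $L_{13}(-1)\cong U^{\oplus 2}\oplus\langle 26\rangle$ and count $x_1y_1+x_2y_2+13z^2\equiv 1\bmod 8$ (equivalently, representations of $-1$ by your form, which is what the reflective $(-2)$-vectors actually satisfy). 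That count is $N_{1,0}(2^3)=4224=\frac{11}{10}\cdot 3840$, making the Euler factor at $2$ equal to $\frac{11}{10}$ rather than $1$; your remaining ingredients ($L(2,\chi_{52})=\frac{5\pi^2\sqrt{13}}{169}$ via $B_{2,\psi_{13}}=4$, the factor $\frac{169}{170}$ at $13$, and the analytic prefactor $\frac{2400}{169}$) are all correct, and multiplying by $\frac{11}{10}$ recovers the paper's $\frac{264}{17}$. Your count $3840$ is instead the density of the Heegner divisor attached to vectors of square $+2$ in the signature-$(2,3)$ lattice, which is not the branch divisor (indeed $v^{\perp}$ misses $\mathcal{D}_{L_{13}}$ for such $v$).

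This slip is non-fatal here only by luck: both $\frac{240}{17}$ and $\frac{264}{17}$ exceed $4b=12$, so your argument still delivers uniruledness. But since you correctly emphasize that the inequality is won only narrowly, the sign of the rank-one summand in the $2$-adic count is exactly the kind of error that could flip the verdict in a neighboring case, and you should resolve it (e.g., against \cite{weilrep}, which returns $-\frac{264}{17}$) rather than leave it as a caveat.
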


\begin{proof}
From Lemma \ref{lemma:sec2:(1,0)-coef} one obtains (see also \cite{weilrep}): 
\[
c_{1,0}\left(E_{\frac{5}{2},L_{13}}\right)=-\frac{264}{17}=-15.52941176.
\]
Uniruledness follows from Proposition \ref{sec3:prop:uniruled}.
\end{proof}

\begin{corollary}
The moduli space of Kummer surfaces $\mathcal{A}_d^*$ associated to $(1,d)$-polarized abelian surfaces is uniruled for $d=13$.
\end{corollary}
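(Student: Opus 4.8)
The plan is to read the corollary off Proposition~\ref{sec4:prop:F13} via the finite map recorded in \eqref{sec4:eq:AdAd*}. Recall that $\mathcal{A}_{13}^*=\mathcal{D}_{L_{13}}\big/{\rm{O}}^+\left(L_{13}\right)$ while $\mathcal{F}_{L_{13}}=\mathcal{D}_{L_{13}}\big/\widetilde{\rm{O}}^+\left(L_{13}\right)$, where $L_{13}=U^{\oplus 2}\oplus A_1(-13)$. Since $\widetilde{\rm{O}}^+\left(L_{13}\right)$ is the kernel of the discriminant homomorphism ${\rm{O}}^+\left(L_{13}\right)\to {\rm{O}}\left(A_{L_{13}}\right)$ and $A_{L_{13}}$ is finite, the inclusion $\widetilde{\rm{O}}^+\left(L_{13}\right)\subseteq {\rm{O}}^+\left(L_{13}\right)$ is of finite index, so the induced morphism $\mathcal{F}_{L_{13}}\longrightarrow \mathcal{A}_{13}^*$ is finite and surjective; in particular it is dominant.

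First I would invoke Proposition~\ref{sec4:prop:F13}, which gives the uniruledness of $\mathcal{F}_{L_{13}}$. Then I would use the fact, already noted in the introduction for any arithmetic overgroup of $\widetilde{\rm{O}}^+\left(L\right)$, that uniruledness descends along the dominant morphism $\mathcal{F}_{L_{13}}\to \mathcal{A}_{13}^*$: the image of a rational curve through a general point of $\mathcal{F}_{L_{13}}$ is a rational curve through a general point of $\mathcal{A}_{13}^*$, both varieties being irreducible and the map dominant. Hence $\mathcal{A}_{13}^*$ is uniruled.

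There is essentially no obstacle here; the entire content of the corollary lies in Proposition~\ref{sec4:prop:F13}, and the only thing that needs a word is that the covering $\mathcal{F}_{L_{13}}\to \mathcal{A}_{13}^*$ is dominant, which is immediate since both sides are quotients of the irreducible domain $\mathcal{D}_{L_{13}}$. Alternatively one can phrase the argument along the full chain $\mathcal{A}_{13}\to \mathcal{F}_{L_{13}}\to \mathcal{A}_{13}^*$ of \eqref{sec4:eq:AdAd*}, but only the second arrow is needed, so I would keep the argument to the two lines above.
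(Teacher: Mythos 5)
Your argument is correct and is essentially the paper's own: both reduce the corollary to Proposition~\ref{sec4:prop:F13} together with the observation that $\mathcal{F}_{L_{13}}$ dominates $\mathcal{A}_{13}^*$ (via the finite quotient by the overgroup ${\rm{O}}^+\left(L_{13}\right)\supseteq\widetilde{\rm{O}}^+\left(L_{13}\right)$), so that uniruledness descends. You merely spell out the dominance and descent step that the paper leaves implicit.
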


\begin{proof}
The corollary follows from Proposition \ref{sec4:prop:F13} and the fact that the map \eqref{sec4:eq:AdAd*} factors through $\mathcal{F}_{L_d}$.
\end{proof}

\begin{remark}
Further uniruledness results can be obtained for moduli spaces parameterizing special families of cubic fourfolds or hyperk\"{a}hler manifolds. This can also be extended to moduli spaces of irreducible symplectic varieties (singular), see \cite{BL22}. 
\end{remark}

\bibliography{Bibliography}
\bibliographystyle{alpha}
\end{document}